\documentclass[12pt]{amsart}
\usepackage{amscd,amsmath,amsthm,amssymb}
\usepackage{amsfonts,amssymb,amscd,amsmath,enumerate,verbatim}
\usepackage[left]{lineno}
\usepackage{pstricks, pst-plot,pst-3d}
\usepackage{tikz}
\usepackage{epsfig}
\newpsstyle{fatline}{linewidth=1.5pt}
\definecolor{verylight}{gray}{0.97}
\definecolor{light}{gray}{0.9}
\definecolor{medium}{gray}{0.85}
\definecolor{dark}{gray}{0.6}
\def\NZQ{\mathbb}               

\def\ZZ{{\NZQ Z}}
\def\RR{{\NZQ R}}

\def\Cc{{\mathcal C}}

\def\opn#1#2{\def#1{\operatorname{#2}}} 
\opn\chara{char} \opn\length{\ell} \opn\pd{pd} \opn\rk{rk}
\opn\projdim{proj\,dim} \opn\injdim{inj\,dim} \opn\rank{rank}
\opn\depth{depth} \opn\grade{grade} \opn\height{height}
\opn\reg{reg}
\opn\embdim{emb\,dim} \opn\codim{codim}
\opn\Cl{Cl}

\opn\Tr{Tr} \opn\bigrank{big\,rank}
\opn\superheight{superheight}\opn\lcm{lcm}
\opn\trdeg{tr\,deg}
	\opn\reg{reg} \opn\lreg{lreg} \opn\ini{in} \opn\lpd{lpd}
	\opn\size{size} \opn\sdepth{sdepth}
	\opn\link{link}
    \opn\fdepth{fdepth}\opn\lex{lex}
	\opn\tr{tr}\opn\del{del}
	\opn\type{type}
	\opn\gap{gap}
	\opn\arithdeg{arith-deg}
	\opn\revlex{revlex}
	\opn\div{div} \opn\Div{Div} \opn\cl{cl} \opn\Cl{Cl}
	\opn\Spec{Spec} \opn\Supp{Supp} \opn\supp{supp} \opn\Sing{Sing}
	\opn\Ass{Ass} \opn\Min{Min}\opn\Mon{Mon}
	\opn\Ann{Ann} \opn\Rad{Rad} \opn\Soc{Soc}
	\opn\Im{Im} \opn\Ker{Ker} \opn\Coker{Coker} \opn\Am{Am}
	\opn\Hom{Hom} \opn\Tor{Tor} \opn\Ext{Ext} \opn\End{End}
	\opn\Aut{Aut} \opn\id{id}
	
	\opn\nat{nat}
	\opn\pff{pf}
	\opn\Pf{Pf} \opn\GL{GL} \opn\SL{SL} \opn\mod{mod} \opn\ord{ord}
	\opn\Gin{Gin} \opn\Hilb{Hilb}\opn\sort{sort}
	\opn\PF{PF}\opn\Ap{Ap}
	\opn\mult{mult}
	\opn\bight{bight}
    \opn\adj{adj}
	\opn\div{div}
	\opn\Div{Div}
	\opn\aff{aff}
	\opn\relint{relint} \opn\st{st}
	\opn\lk{lk} \opn\cn{cn} \opn\core{core} \opn\vol{vol}  \opn\inp{inp} \opn\nilpot{nilpot}
	\opn\link{link} \opn\star{star}\opn\lex{lex}\opn\set{set}
	\opn\width{wd}
	\opn\Fr{F}
	\opn\QF{QF}
	\opn\G{G}
	\opn\type{type}\opn\res{res}
	\opn\conv{conv}
	\opn\Deg{Deg}
	\opn\Sym{Sym}
	\opn\Con{Con}
	\opn\gr{gr}
	
	\def\pot#1#2{#1[\kern-0.28ex[#2]\kern-0.28ex]}

	\opn\dirlim{\underrightarrow{\lim}}
	\opn\inivlim{\underleftarrow{\lim}}
	\def\Implies{\ifmmode\Longrightarrow \else
		\unskip${}\Longrightarrow{}$\ignorespaces\fi}
	\def\implies{\ifmmode\Rightarrow \else
		\unskip${}\Rightarrow{}$\ignorespaces\fi}
	\def\iff{\ifmmode\Longleftrightarrow \else
		\unskip${}\Longleftrightarrow{}$\ignorespaces\fi}

	\let\:=\colon
	\newtheorem{Theorem}{Theorem}[section]
	\newtheorem{Lemma}[Theorem]{Lemma}
	\newtheorem{Corollary}[Theorem]{Corollary}

	\newtheorem{Example}[Theorem]{Example}

	\newtheorem{Conjecture}[Theorem]{Conjecture}
	
	\newtheorem{Question}[Theorem]{Question}
	\let\epsilon\varepsilon
	\let\kappa=\varkappa
	\def\qed{\ifhmode\textqed\fi
		\ifmmode\ifinner\quad\qedsymbol\else\dispqed\fi\fi}
	\def\textqed{\unskip\nobreak\penalty50
		\hskip2em\hbox{}\nobreak\hfil\qedsymbol
		\parfillskip=0pt \finalhyphendemerits=0}
	\def\dispqed{\rlap{\qquad\qedsymbol}}
	
	\opn\dis{dis}
	\def\pnt{{\raise0.5mm\hbox{\large\bf.}}}
	
	\opn\Lex{Lex}

\begin{document}
\title[Regularity and depth of binomial ideals]{Regularity and depth of binomial ideals arising from combinatorics}

\author[T.~Hibi]{Takayuki Hibi}
\author[S.~A.~ Seyed Fakhari]{Seyed Amin Seyed Fakhari}

\address{(Takayuki Hibi) Department of Pure and Applied Mathematics, Graduate School of Information Science and Technology, Osaka University, Suita, Osaka 565--0871, Japan}
\email{hibi@math.sci.osaka-u.ac.jp}
\address{(Seyed Amin Seyed Fakhari) Departamento de Matem\'aticas, Universidad de los Andes, Bogot\'a, Colombia}
\email{s.seyedfakhari@uniandes.edu.co}

\subjclass[2020]{05E40, 13D02}

\keywords{regularity, depth, binomial ideal, adjacent $2$-minor, finite lattice}

\begin{abstract}
Regularity and depth of binomial ideals generated by adjacent $2$-minors together with those arising from finite lattices are studied.

\end{abstract}	
\maketitle
\thispagestyle{empty}

\section*{Introduction}
Combinatorics on the classical theory of lattices and graphs has been creating fascinating research projects in commutative algebra.  In particular, ideals generated by quadratic binomials \cite{HH, HHHKR, Hibi, Q} have been studied for quarter century by a huge number of research papers and, together with monomial ideals, belong to current trends on combinatorics and commutative algebra.  The purpose of the present paper is to study the regularity and the depth of binomial ideals arising from combinatorics.  More precisely, the  regularity and the depth of binomial ideals generated by adjacent $2$-minors \cite{HH} together with those arising from finite lattices \cite{DEH, EH, Hibi} is investigated.

\section{Binomial ideals generated by adjacent $2$-minors}
Let $X = (x_{ij})_{i=1,\ldots,m \atop  j=1,\ldots,n}$  be an $m \times n$ matrix of indeterminates.  An \emph{adjacent $2$--minor} of $X$ is a binomial of the form $ x_{i,j}x_{i+1,j+1} - x_{i+1,j}x_{i,j+1}$.  Let $K[X]=K[(x_{ij})_{i=1,\ldots,m \atop  j=1,\ldots,n}]$ denote the polynomial ring in $mn$ variables over a field $K$.  A {\em cell} is a square $C \subset \RR^2$ whose vertices are $(i,j),(i,j+1), (i+1,j), (i+1,j+1)$, where $i,j\in \ZZ$ with $1 \leq i<m, 1 \leq j<n$.  We associate such a cell $C$ with the binomial $f_{C} = x_{i,j}x_{i+1,j+1} - x_{i+1,j}x_{i,j+1}$.  Let $V(C)$ (resp. $E(C)$) denote the set of vertices (resp. edges) of a cell $C$.   Let $\Cc$ be a collection of cells and $S=S_\Cc=K[x_{v}: v \in V(\Cc)]$ the polynomial ring in $|V(\Cc)|$ variables over $K$, where $V(\Cc) = \bigcup_{C \in \Cc} V(C)$.  Let $E(\Cc) = \bigcup_{C \in \Cc} E(C)$.  Given a collection $\Cc$ of cells, we introduce the binomial ideal
\[
I_{\rm adj}(\Cc) = (f_C : C \in \Cc) \subset S.
\]

A collection $\Cc$ of cells is  {\em row convex} if the following condition is satisfied: For any pair of cells  $C,C'$ belonging to $\Cc$ with $V(C)=\{(i,j),(i,j+1), (i+1,j), (i+1,j+1)\}$ and $V(C') =\{(i',j),(i',j+1), (i'+1,j), (i'+1,j+1)\}$, with $i < i'$, the cell $C''$ with $V(C'') =\{(i'',j),(i'',j+1), (i''+1,j), (i''+1,j+1)\}$ belongs to $\Cc$, for each integer $i''$ with $i< i''<i'$.  A {\em column convex} collection of cells is defined similarly. A collection $\Cc$ of cells is {\em convex} if it is both row convex and column convex.  A collection $\Cc$ of cells is {\em connected} if for $C \in \Cc$ and $C' \in \Cc$, there is a sequence $C=C_0, C_1, \ldots, C_{\ell-1}, C_\ell = C'$ of cells of $\Cc$, called a {\em path} between $C$ and $C'$, for which $|E(C_i) \cap E(C_{i+1})| =1$ for each $0 \leq i < \ell$.

A collection of cells of Figure $1$ (resp. Figure $2$) is called a {\em square tetromino} (resp. an {\em $X$-pentomino}). The labeling of the vertices in Figure 2, will be used in the proof of Lemma \ref{Xpent}. It is shown \cite[Theorems 3.1 and 3.5]{HNQS} that if $\Cc$ is convex, then $I_{\rm adj}(\Cc)$ is a complete intersection if and only if $\Cc$ contains neither a square tetromino nor an $X$-pentomino.  It follows from the computation by using Koszul complex that if $I_{\rm adj}(\Cc)$ is a complete intersection, then ${\rm reg}(I_{\rm adj})(\mathcal{C}) = |\Cc| + 1$.

\begin{figure}[h]
    \centering
    \includegraphics[scale=0.85]{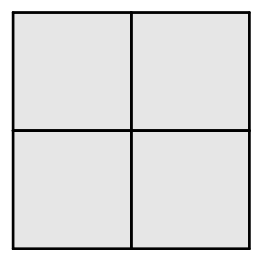}
    \caption{A square tetromino.}
    \label{}
\end{figure}

\begin{figure}[h]
    \centering
    \includegraphics[scale=0.3]{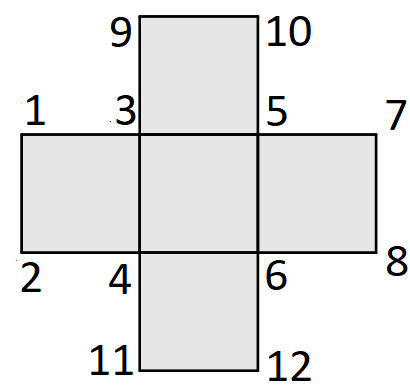}
    \caption{An $X$-pentomino.}
    \label{}
\end{figure}

\begin{Example}
{\em Let $I_{\rm adj}(\Cc)$ is the ideal generated by adjacent $2$-minors of a $3 \times 4$ matrix of indeterminates.  Then ${\rm reg}(I_{\rm adj}(\mathcal{C}) = 5$ and ${\rm depth}(S/I_{\rm adj}(\mathcal{C}))=4$.

}
\end{Example}

\begin{Lemma} \label{Xpent}
Let $\mathcal{C}$ denote the $X$-pentomino. Then $${\rm reg}(I_{\rm adj}(\mathcal{C}))=5, \quad {\rm depth}(I_{\rm adj}(\mathcal{C}))=8.$$
\end{Lemma}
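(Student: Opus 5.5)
The plan is to reduce everything to a monomial computation: find a Gröbner basis with squarefree initial ideal, then use the fact that a squarefree initial ideal has the same regularity and depth as the ideal itself (Conca--Varbaro). Since ${\rm depth}(I)={\rm depth}(S/I)+1$ and ${\rm reg}(I)={\rm reg}(S/I)+1$, the targets are ${\rm reg}(S/I_{\rm adj}(\Cc))=4$ and ${\rm depth}(S/I_{\rm adj}(\Cc))=7$, where $S$ has $12$ variables.

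\emph{Setup.} Use the labeling of Figure 2. The central cell $C_0$ has vertices $a,b,c,d$, and each arm cell $C_k$ ($k=1,\dots,4$) shares an edge of $C_0$ and has two new vertices $p_k,q_k$. The five binomials are $f_0=f_{C_0}$ and $f_k=f_{C_k}$. Each monomial of $f_k$ contains exactly one of $p_k,q_k$. Hence for $J=(f_1,\dots,f_4)$ one can pick a term order in which the leading terms of $f_1,\dots,f_4$ involve pairwise disjoint new variables. These leading terms are pairwise coprime, so $J$ is a complete intersection with ${\rm reg}(S/J)=4$ and ${\rm depth}(S/J)=8$ by the Koszul complex.

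\emph{Gröbner basis and initial ideal.} Next, refine this to an order, for instance a lex order placing the $p_k,q_k$ above the central variables, chosen so that ${\rm in}(f_0)$ is also squarefree. Run Buchberger's algorithm on $f_0,\dots,f_4$. The only non-coprime pairs are $(f_0,f_k)$, and their S-polynomials produce cubic binomials $x_{u}x_{p_k}x_{q_l}-\cdots$ coming from the $3$-element paths $C_k,C_0,C_l$. Pairs of these may in turn produce quartic binomials involving two opposite arms. I expect the process to terminate with all leading terms squarefree, giving a squarefree monomial ideal $M={\rm in}(I_{\rm adj}(\Cc))$. This step is the main obstacle: the order must be chosen carefully (trying the two diagonals of $C_0$ as leading term of $f_0$) so that the basis stays squarefree and small. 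If it fails, the fallback is the exact sequence $0\to S/(J:f_0)(-2)\to S/J\to S/I\to 0$ together with an explicit computation of $J:f_0$, which should be generated by $J$ and the cubic and quartic binomials above.

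\emph{Invariants of $M$.} With $M$ in hand, view it as the Stanley--Reisner ideal of a complex $\Delta$ on $12$ vertices.
\begin{itemize}
\item For the depth, compute the facets of $\Delta$ and check that $\Delta$ has dimension consistent with ${\rm height}(M)={\rm height}(I)$. Then show depth $7$ by exhibiting a nonvanishing $\tilde H$ in a link via Hochster's formula, giving the upper bound. For the lower bound, use Hochster's formula on all links, or a shelling or skeleton argument, to show depth at least $7$.
\item For the regularity, use Hochster's formula on restrictions $\Delta_W$. Exhibit a $W$ with $\tilde H_{3}(\Delta_W)\neq 0$ or the appropriate degree shift, giving a lower bound of $4$. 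The upper bound $4$ follows from the mapping cone on the sequence above, since ${\rm reg}(S/J)=4$ and ${\rm reg}(S/(J:f_0))+1\le 4$ once $J:f_0$ is known to have regularity at most $3$.
\end{itemize}
Finally, transfer back: ${\rm reg}(I_{\rm adj}(\Cc))=5$ and ${\rm depth}(I_{\rm adj}(\Cc))=8$.
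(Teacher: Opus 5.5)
\textbf{Verdict.} There is a genuine gap: your main route requires a squarefree initial ideal of $I=I_{\rm adj}(\mathcal{C})$, and none exists.

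\textbf{Why the main route fails.} $I$ is not a radical ideal. A squarefree ${\rm in}_<(I)$, for any term order, would force $I$ to be radical. So there is no $M$, no complex $\Delta$, and no Conca--Varbaro transfer.

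Concretely, use the labeling of the paper's proof. There
\[
I=(x_1x_4-x_2x_3,\ x_3x_6-x_4x_5,\ x_5x_8-x_6x_7,\ x_5x_9-x_3x_{10},\ x_4x_{12}-x_6x_{11}),
\]
with central cell $\{3,4,5,6\}$. One checks directly that
\begin{align*}
x_3^2x_{10}(x_1x_8-x_2x_7)&=x_1x_7x_9(x_3x_6-x_4x_5)+x_1x_3x_9(x_5x_8-x_6x_7)\\
&\quad+x_5x_7x_9(x_1x_4-x_2x_3)-x_3(x_1x_8-x_2x_7)(x_5x_9-x_3x_{10}),
\end{align*}
so $g=x_3x_{10}(x_1x_8-x_2x_7)$ satisfies $g^2\in I$.

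On the other hand, $g\notin I$. To see this, localize at the prime $Q=(x_3,x_4,x_5,x_6,\delta)$, where $\delta=x_2x_7x_9x_{12}-x_1x_8x_{10}x_{11}$.
\begin{itemize}
\item In $S_Q$ all free variables and $x_1x_8-x_2x_7$ are units.
\item The binomials $x_1x_4-x_2x_3$, $x_5x_9-x_3x_{10}$, $x_5x_8-x_6x_7$ then express $x_4,x_5,x_6$ as unit multiples of $x_3$.
\item After this substitution, $x_4x_{12}-x_6x_{11}$ and $x_3x_6-x_4x_5$ become unit multiples of $x_3\delta$ and $x_3^2$.
\end{itemize}
Thus $S_Q/I_Q\cong B/(x_3\delta,x_3^2)$, where $B$ is a $2$-dimensional regular local ring with parameters $x_3,\delta$. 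In this ring $x_3\neq 0$, and $g$ is a unit times $x_3$.

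\textbf{Consequences for your plan.} Both of your depth bounds and your lower bound on the regularity were to be read off from $\Delta$ via Hochster's formula, so they have no foundation. Your regularity upper bound uses only the sequence $0\to S/(J:f_0)(-2)\to S/J\to S/I\to 0$. However, its decisive input $J:f_0$ is left uncomputed. It is also not the ideal you describe: in fact $J:f_0=J+(\delta)$, with no new cubic generators.

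\textbf{How to repair it.} Your fallback sequence can carry the whole proof, but it needs linkage, which you do not supply. Let $\mathfrak{P}=(x_3,x_4,x_5,x_6)$.
\begin{itemize}
\item The arm ideal $J$ and $\mathfrak{P}$ are both complete intersections of height $4$, and $J\subseteq\mathfrak{P}$.
\item $\mathfrak{P}$ is the only height-$4$ prime containing $I$, hence the only minimal prime of $J$ containing $f_0$.
\item $J_{\mathfrak{P}}=\mathfrak{P}S_{\mathfrak{P}}$, since $\delta$ is a unit there.
\item Hence $J:f_0=J:\mathfrak{P}=J+(\delta)$. Up to sign, $\delta$ is the determinant of the matrix expressing the arm binomials in $x_3,\dots,x_6$.
\end{itemize}
Being linked to $S/\mathfrak{P}$ through four quadrics, $S/(J+(\delta))$ is Cohen--Macaulay of dimension $8$ with $h$-polynomial $(1+t)^4-t^4$. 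Its regularity is therefore $3$. The exact sequence then gives:
\begin{itemize}
\item ${\rm depth}(S/I)\ge\min\{8,\,8-1\}=7$;
\item ${\rm reg}(S/I)\le\max\{4,\,5-1\}=4$;
\item ${\rm reg}(S/I)\ge 4$, from $5={\rm reg}(S/(J:f_0)(-2))\le\max\{{\rm reg}(S/J),\,{\rm reg}(S/I)+1\}$;
\item ${\rm depth}(S/I)\le 7$, from the height-$5$ minimal prime of inner $2$-minors.
\end{itemize}

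\textbf{Comparison with the paper.} The completed fallback is shorter than the paper's route and differs from it. The paper never claims a squarefree degeneration. It uses a revlex Gr\"obner basis whose initial ideal is not squarefree, together with only the one-sided inequalities ${\rm reg}(I)\le{\rm reg}({\rm in}(I))$ and ${\rm depth}(S/I)\ge{\rm depth}(S/{\rm in}(I))$. For the lower bound on regularity it uses an algebra retract onto the T-tetromino.
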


\begin{proof}
First we prove ${\rm reg}(I_{\rm adj}(\mathcal{C}))=5$.  Let, say, $V(\mathcal{C})=[12]=\{1,\ldots,12\}$ and suppose that the vertices of $\Cc$ are labeled as in Figure 2. To simplify the notation, we denote the variables of $S$ with $x_1, \ldots, x_{12}$ (instead of using double-indexed variables). Then$$I_{\rm adj}(\mathcal{C})=(x_1x_4-x_2x_3, x_3x_6-x_4x_5,x_5x_8-x_6x_7, x_5x_9-x_3x_{10},x_4x_{12}-x_6x_{11}).$$Set $I:=I_{\rm adj}(\mathcal{C})$ and $S':=K[x_1, \ldots, x_{10}]$.  Since
\[
I':=I\cap S'=x_1x_4-x_2x_3, x_3x_6-x_4x_5,x_5x_8-x_6x_7, x_5x_9-x_3x_{10})
\]
is a complete intersection \cite[Theorems 3.1 and 3.5]{HNQS}, it follows that $I'$ is resolved by Koszul complex. This yields that ${\rm reg}(I')=5$. On the other hand, $S'/I'\subseteq S/I$ is an algebra retract. It then follows from \cite[Corollary 2.5]{OHH} that ${\rm reg}(I')\leq {\rm reg}(I)$. In particular, ${\rm reg}(I)\geq 5$.  We show that ${\rm reg}(I)\leq 5$.

Let $<_{\rm rev}$ denote the reverse lexicographic order on $S$ induced by the ordering $x_1> x_2> \cdots > x_{12}$ of variables.  One easily sees that
\begin{eqnarray*}
&\{ x_1x_4-x_2x_3, x_3x_6-x_4x_5,x_5x_8-x_6x_7, x_5x_9-x_3x_{10},x_4x_{12}-x_6x_{11},&\\ & x_5x_8x_{11}-x_4x_7x_{12}, x_3x_6x_9-x_3x_4x_{10}, x_4^2x_7x_{12}-x_3x_4x_8x_{12},&\\ & x_3x_8x_{10}x_{11}-x_4x_7x_9x_{12}, x_3x_4x_{10}x_{11}-x_3x_4x_9x_{12}, x_3x_4x_7x_{10}-x_3^2x_8x_{10},&\\ & x_1x_4x_6x_9-x_1x_4^2x_{10},x_1x_4x_8x_{10}x_{11}-x_2x_4x_7x_9x_{12},x_1x_4^2x_{10}x_{11}-x_1x_4^2x_9x_{12},&\\ & x_1x_4^2x_7x_{10}-x_1x_3x_4x_8x_{10}\}&
\end{eqnarray*}
is a Gr\"obner basis of $I$ with respect to $<_{\rm rev}$. Set $J:={\rm in}_{\leq}(I)$. One has
\begin{eqnarray*}
J=(x_2x_3, x_4x_5, x_6x_7, x_5x_9, x_6x_{11},  x_5x_8x_{11}, x_3x_6x_9, x_4^2x_7x_{12}, x_3x_8x_{10}x_{11},\\  x_3x_4x_{10}x_{11}, x_3x_4x_7x_{10}, x_1x_4x_6x_9,x_1x_4x_8x_{10}x_{11},x_1x_4^2x_{10}x_{11},  x_1x_4^2x_7x_{10}).
\end{eqnarray*}
By virtue of \cite[Theorem 3.3.4]{HHgtm260}, it is enough to show that ${\rm reg}(J)\leq 5$.

Considering the short exact sequence$$0\longrightarrow \frac{S}{(J:x_4)}(-1)\longrightarrow \frac{S}{J}\longrightarrow \frac{S}{(J,x_4)}\longrightarrow 0,$$
we deuce that
\[
\begin{array}{rl}
{\rm reg}(J)\leq\max\{{\rm reg}(J:x_4)+1,{\rm reg}(J,x_4)\}.
\end{array} \tag{1} \label{1}
\]
Note that $$(J,x_4)=(x_4, x_2x_3,x_6x_7,x_5x_9,x_6x_{11},x_5x_8x_{11},x_3x_6x_9,x_3x_8x_{10}x_{11}).$$
Set $J_1=(J,x_4)$.  Furthermore, considering the short exact sequence$$0\longrightarrow \frac{S}{(J_1:x_{11})}(-1)\longrightarrow \frac{S}{J_1}\longrightarrow \frac{S}{(J_1,x_{11})}\longrightarrow 0,$$
we conclude that
\[
\begin{array}{rl}
{\rm reg}(J_1)\leq\max\{{\rm reg}(J_1:x_{11})+1,{\rm reg}(J,x_{11})\}.
\end{array} \tag{2} \label{2}
\]
Observe that$$(J_1,x_{11})=(x_4,x_{11}, x_2x_3,x_6x_7,x_5x_9,x_3x_6x_9).$$It follows from Taylor resolution that ${\rm reg}(J_1,x_{11})\leq 4$.  Furthermore, $$(J_1:x_{11})=(x_4, x_6,x_2x_3,x_5x_9,x_5x_8,x_3x_8x_{10})$$is the sum of two ideals $(x_4,x_6,x_2x_3,x_3x_8x_{10})$ and $(x_5x_9,x_5x_8)$ which have linear quotients.  Thus, by using \cite[Corollary 3.2]{H}, one has ${\rm reg}(J_1:x_{11})\leq 4$.  Hence, the inequality (\ref{2}) yields that ${\rm reg}(J_1)\leq 5$.

Recall that $J_1=(J,x_4)$.  Seeing (\ref{1}), it is enough to show that ${\reg}(J:x_4)\leq 4$. Set $J_2:=(J:x_4)$.  One has
\begin{align*}
J_2=( x_5,x_2x_3, x_6x_7, x_6x_{11}, x_3x_6x_9, x_4x_7x_{12}, x_3x_{10}x_{11}, x_3x_7x_{10},\\  x_1x_6x_9,x_1x_8x_{10}x_{11},x_1x_4x_{10}x_{11},  x_1x_4x_7x_{10}).
\end{align*}
As for $J_1$, considering the short exact sequence$$0\longrightarrow \frac{S}{(J_2:x_3)}(-1)\longrightarrow \frac{S}{J_2}\longrightarrow \frac{S}{(J_2,x_3)}\longrightarrow 0,$$
we conclude that
\[
\begin{array}{rl}
{\rm reg}(J_2)\leq\max\{{\rm reg}(J_2:x_3)+1,{\rm reg}(J,x_3)\}.
\end{array} \tag{3} \label{3}
\]
Since $$(J_2,x_3)=(x_3, x_5, x_6x_7, x_6x_{11}, x_4x_7x_{12}, x_1x_6x_9,x_1x_8x_{10}x_{11},x_1x_4x_{10}x_{11}, x_1x_4x_7x_{10})$$has linear quotients, one has ${\rm reg}(J_2,x_3)=4$.  Moreover, as$$(J_2:x_3)=(x_2, x_5, x_6x_7, x_6x_{11}, x_6x_9, x_4x_7x_{12}, x_{10}x_{11}, x_7x_{10})$$has linear quotients, one has ${\rm reg}(J_2 : x_3)=3$.  Hence, inequality (\ref{3}) yields that ${\rm reg}(J_2)\leq 4$.  This completes the proof of ${\rm reg}(I_{\rm adj}(\mathcal{C}))=5$.

Second, we prove ${\rm depth}(I_{\rm adj}(\mathcal{C}))=8$.  Since the ideal generated by all inner 2-minors of $\mathcal{C}$ is a minimal prime of $I$ of height $5$ (\cite[Theorem 8.11]{HHO}), one has ${\rm depth}(I)\leq 8$. The proof of the reverse inequality is similar to the argument for the regularity. So, we omit the details. The only difference is that when a monomial ideal has linear quotients, its depth can be computed by \cite[Corollary 8.2.2]{HHgtm260}.
\end{proof}

Let $\Cc$ be a collection of cells.  A vertex $v$ of $\Cc$ is called {\em free} if $v$ belongs to exactly one cell of $\Cc$.  We say that a cell $C \in \Cc$ {\em is adjacent to} $C' \in \Cc$ if $|E(C) \cap E(C')|= 1$.

\begin{Lemma} \label{associated}
Let $\mathcal{C}$ denote the $X$-pentomino. If $i,j$ are free vertices of $\mathcal{C}$ which belong to a cell $C$ of $\mathcal{C}$, then for any associated prime $\mathfrak{p}\in\Ass{S/I_{\rm adj}(\mathcal{C})}$, one has either $x_i\notin\mathfrak{p}$ or $x_j\notin\mathfrak{p}$.
\end{Lemma}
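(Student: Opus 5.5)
By the dihedral symmetry of the $X$-pentomino it suffices to treat one pair of free vertices. With the labeling of Figure 2 used in the proof of Lemma \ref{Xpent}, the cells are $\{1,2,3,4\}$, $\{3,4,5,6\}$ (the central cell), $\{5,6,7,8\}$, $\{3,5,9,10\}$ and $\{4,6,11,12\}$. The free vertices are $1,2,7,8,9,10,11,12$, and the pairs of free vertices lying in a common cell are $\{1,2\}$, $\{7,8\}$, $\{9,10\}$ and $\{11,12\}$. A symmetry of the pentomino permuting the four arm cells induces an automorphism of $S$ (a permutation of variables, possibly combined with a sign change of some generators). This automorphism maps $I=I_{\rm adj}(\mathcal{C})$ to itself and permutes $\Ass(S/I)$ accordingly. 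So I would only prove the claim for $i=1$, $j=2$, where the relevant generator is $x_1x_4-x_2x_3$.

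The cleanest route is to show that one of $x_1, x_2$ is a nonzerodivisor on $S/I$, say $x_1$. Then $x_1$ lies in no associated prime and the claim follows. To test this, I would take the reverse lexicographic order with $x_1$ the smallest variable, e.g. $x_2>x_3>\cdots>x_{12}>x_1$, and compute a Gr\"obner basis of $I$, as was done in the proof of Lemma \ref{Xpent}. For revlex with $x_1$ last, one has ${\rm in}(I:x_1)={\rm in}(I):x_1$. Hence if no minimal generator of ${\rm in}(I)$ is divisible by $x_1$, then $I:x_1=I$ and we are done. Intuitively this is plausible: $x_1$ occurs only in the single generator $x_1x_4-x_2x_3$, and with $x_1$ smallest its leading term is $x_2x_3$. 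Buchberger's S-pairs then involve $x_1$ only through trailing terms.

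If this check fails, so that neither $x_1$ nor $x_2$ alone is regular, I would argue directly. Suppose $\mathfrak{p}=I:g\in\Ass(S/I)$ with $x_1,x_2\in\mathfrak{p}$. Then $g\in (I:x_1)\cap(I:x_2)$, and it suffices to show $(I:x_1)\cap(I:x_2)=I$. This can be done by computing $I:x_1$ and $I:x_2$ with the elimination/Gr\"obner methods above, using the $\mathbb{Z}^{m}\times\mathbb{Z}^n$ multigrading to keep the computation small. Equivalently, one can compute a primary decomposition of $I$: the minimal primes are the lattice-type prime of inner minors from \cite[Theorem 8.11]{HHO} together with primes generated by variables plus binomials. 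One then checks that every associated prime misses $x_1$ or $x_2$.

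The main obstacle is the Gr\"obner basis computation in the new order, specifically verifying that $x_1$ (or else $x_2$) never appears in a leading term. I would first try the order above and verify the S-pair reductions by hand, mirroring the list in the proof of Lemma \ref{Xpent}. If a leading term with $x_1$ appears, I would switch to the colon-intersection argument instead.
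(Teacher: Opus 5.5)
Your main route fails, because neither $x_1$ nor $x_2$ is a nonzerodivisor on $S/I$, where $I=I_{\rm adj}(\mathcal{C})$.

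The prime $Q=(x_1,x_3,x_5,x_6,x_{12})$ contains all five generators, and it is a minimal prime of $I$. In $(S/I)_Q$ the variables $x_2,x_4,x_7,\ldots,x_{11}$ are units. The generators then give $x_5=x_3x_6x_4^{-1}$, $x_6=x_5x_8x_7^{-1}$ and $x_3=x_5x_9x_{10}^{-1}$, so $x_5=x_5^2u$ with $u$ a unit. Since $1-x_5u$ is a unit in the local ring, $x_5=0$, and then $x_3=x_6=x_1=x_{12}=0$. Thus $IS_Q=QS_Q$. Reflecting left--right, $(x_2,x_4,x_5,x_6,x_{10})$ is a minimal prime containing $x_2$.

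So your revlex test with $x_1$ (or $x_2$) last must produce leading terms divisible by that variable. Your heuristic that $x_1$ enters only through trailing terms is false. The lemma genuinely concerns the pair $x_i,x_j$ jointly.

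Your fallback does not close the gap. The condition $(I:x_1)\cap(I:x_2)=I$ is just $I:(x_1,x_2)=I$. By prime avoidance this is equivalent to $(x_1,x_2)\not\subseteq\mathfrak{p}$ for all $\mathfrak{p}\in\Ass(S/I)$, which is a restatement of the lemma. A machine computation would decide this finite case, but you do not carry it out, so as written nothing is proved. The primary-decomposition variant only uses the minimal primes. The real difficulty is possible embedded primes, and your proposal gives no control over them.

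The missing idea is to bound associated primes by depth.
\begin{itemize}
\item By Lemma \ref{Xpent}, $\depth(S/I)=7$. Since $\dim S/\mathfrak{p}\ge\depth(S/I)$ for every $\mathfrak{p}\in\Ass(S/I)$, every associated prime has height at most $5$.
\item Any such $\mathfrak{p}$ contains a minimal prime $\mathfrak{q}$. From the description of minimal primes in \cite[Theorem 8.11]{HHO}, one checks that $\height(\mathfrak{q}+(x_i,x_j))\ge 6$.
\item This holds because no minimal prime contains both $x_i$ and $x_j$, and the only minimal prime of height $4$, namely $(x_3,x_4,x_5,x_6)$, contains neither.
\end{itemize}
Hence no associated prime, embedded or not, contains both $x_i$ and $x_j$. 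This is the paper's argument.
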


\begin{proof}
Since ${\rm depth}(S/I_{\rm adj}(\mathcal{C}))=7$ (Lemma \ref{Xpent}), one has $\height{\mathfrak{p}}\leq 5$. There is a prime ideal $\mathfrak{q}\in \Min{I_{\rm adj}(\mathcal{C})}$ with $\mathfrak{q}\subseteq \mathfrak{p}$. Since the structure of minimal primes of $I_{\rm adj}(\mathcal{C})$ is known \cite[Theorem 8.11]{HHO}, one has $\height{(\mathfrak{q}+(x_i,x_j))}\geq 6$. Since $\mathfrak{q}\subseteq \mathfrak{p}$ and $\height{\mathfrak{p}}\leq 5$, we deduce that either $x_i\notin\mathfrak{p}$ or $x_j\notin\mathfrak{p}$. \, \, \, \, \, \,   \end{proof}

We introduce a generalized $X$-pentomino.  A collection $\Cc$ of cells is called a {\em row of cells} if $V(\Cc)=\{(i,j), (i,j+1) : p \leq i \leq q\}$ and is called a {\em column of cells} if $V(\Cc)=\{(i,j), (i+1,j) : p \leq j \leq q\}$, where $i, j, p, q \in \ZZ_{>0}$ with $p < q$.  A {\em generalized $X$-pentomino} is a collection $\Cc$ of cells of the form $\Cc = \Cc' \cup \Cc''$, where $\Cc'$ is a row of cells and $\Cc''$ is a column of cells, for which $\Cc$ contains an $X$-pentomino (see e.g.,Figure 3).  The cell $C$ with $\{C\} = \Cc' \cap \Cc''$ is called the {\em central cell} of the generalized $X$-pentomino $\Cc$.

It follows that a connected and convex collection of cells containing an $X$-pentomino and no square tetromino is a generalized $X$-pentomino.

\begin{figure}
\centering
\begin{tikzpicture}[scale=0.6]
\filldraw [fill=black!15!white,draw=black] (0,0) rectangle (1,1);
\filldraw [fill=black!15!white,draw=black] (1,0) rectangle (2,1);
\filldraw [fill=black!15!white,draw=black] (2,0) rectangle (3,1);
\filldraw [fill=black!15!white,draw=black] (3,0) rectangle (4,1);
\filldraw [fill=black!15!white,draw=black] (4,0) rectangle (5,1);
\filldraw [fill=black!15!white,draw=black] (5,0) rectangle (6,1);
\filldraw [fill=black!15!white,draw=black] (6,0) rectangle (7,1);
\filldraw [fill=black!15!white,draw=black] (7,0) rectangle (8,1);
\filldraw [fill=black!15!white,draw=black] (3,1) rectangle (4,2);
\filldraw [fill=black!15!white,draw=black] (3,2) rectangle (4,3);
\filldraw [fill=black!15!white,draw=black] (3,-1) rectangle (4,0);
\filldraw [fill=black!15!white,draw=black] (3,-2) rectangle (4,-1);
\filldraw [fill=black!15!white,draw=black] (3,-3) rectangle (4,-2);
\end{tikzpicture}
\caption{A generalized $X$-pentomino.}
\end{figure}

\begin{Theorem} \label{depth}
Let $\mathcal{C}$ be a generalized $X$-pentomino and $D$ the central cell of $\mathcal{C}$. Then the followings hold:
\begin{itemize}
\item [(i)] ${\rm depth}(S/I_{\rm adj}(\mathcal{C}))=|V(\mathcal{C})|-|\mathcal{C}|$.
\item [(ii)] Suppose that $C$ is a cell of $\mathcal{C}$ which is adjacent to $D$ and has two free vertices $i,j$. Then for any associated prime $\mathfrak{p}\in\Ass{S/I_{\rm adj}(\mathcal{C})}$, one has either $x_i\notin\mathfrak{p}$ or $x_j\notin\mathfrak{p}$.
\end{itemize}
\end{Theorem}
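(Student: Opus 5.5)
We will prove (i) and (ii) together by induction on $|\mathcal{C}|$. The base case is the $X$-pentomino, where $|V(\mathcal{C})|-|\mathcal{C}|=12-5=7$: part (i) is the depth statement of Lemma~\ref{Xpent} (read as ${\rm depth}(S/I)=7$), and part (ii) is Lemma~\ref{associated}. For the upper bound in (i) at every stage, we use that the ideal of inner $2$-minors is a minimal prime of $I=I_{\rm adj}(\mathcal{C})$ of height $|\mathcal{C}|$ (\cite[Theorem 8.11]{HHO}, exactly as in Lemma~\ref{Xpent}). Hence $\dim S/I\ge$, and in fact ${\rm depth}(S/I)\le |V(\mathcal{C})|-|\mathcal{C}|$. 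So only the lower bound needs work.

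For the inductive step, suppose $\mathcal{C}$ is strictly larger than an $X$-pentomino. Then one of the four arms has length at least $2$; say it ends in a cell $E$. Removing $E$ leaves a generalized $X$-pentomino $\mathcal{C}_0$ with central cell $D$. The cell $E$ shares an edge with exactly one cell $E'$ of $\mathcal{C}_0$, and it contributes two new vertices $a,b$ (free in $\mathcal{C}$). Write $f_E=x_ax_u-x_bx_w$, where $u,w$ are the vertices of the common edge, which are free vertices of the end cell $E'$ of $\mathcal{C}_0$. Then
\[
S/I=\bigl(S_0/I_0\bigr)[x_a,x_b]/(f_E),
\]
where $S_0,I_0$ belong to $\mathcal{C}_0$.

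The key claim is that $f_E$ is a nonzerodivisor on $T=(S_0/I_0)[x_a,x_b]$. Granting this, ${\rm depth}(S/I)={\rm depth}(T)-1={\rm depth}(S_0/I_0)+2-1$, which by induction equals $|V(\mathcal{C}_0)|-|\mathcal{C}_0|+1=|V(\mathcal{C})|-|\mathcal{C}|$. This proves (i).

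The associated primes of $T$ are the extensions $\mathfrak{p}T$ with $\mathfrak{p}\in\Ass(S_0/I_0)$. So $f_E$ is a nonzerodivisor as soon as $f_E\notin \mathfrak{p}S_0[x_a,x_b]$ for all such $\mathfrak{p}$. Since $x_a,x_b$ are new variables, $f_E\in\mathfrak{p}[x_a,x_b]$ would force $x_u,x_w\in\mathfrak{p}$. We therefore need the statement: \emph{no associated prime of $S_0/I_0$ contains both variables of the two free vertices of an end cell of an arm}. This is the genuine inductive hypothesis to carry. It is a strengthening of (ii), which concerns only cells adjacent to $D$; when the arm has length one, the end cell is adjacent to $D$ and the statement is exactly (ii). I would therefore prove the stronger statement (ii$'$): for every cell $C$ of $\mathcal{C}$ with two free vertices $i,j$ (these are exactly the arm end cells), no associated prime contains both $x_i$ and $x_j$. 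Then (ii) follows.

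To prove (ii$'$) for $\mathcal{C}$, I would use the same height argument as Lemma~\ref{associated}. By (i), which is already established for $\mathcal{C}$, and the fact that $S/I$ is Cohen–Macaulay-free but satisfies ${\rm depth}\ge \dim-\dots$, every associated prime $\mathfrak{p}$ has $\height\mathfrak{p}\le |V(\mathcal{C})|-{\rm depth}(S/I)=|\mathcal{C}|$. Every $\mathfrak{p}$ contains some minimal prime $\mathfrak{q}$ of $I$. Using the description of minimal primes of $I_{\rm adj}(\mathcal{C})$ in \cite[Theorem 8.11]{HHO}, each $\mathfrak{q}$ has height at least $|\mathcal{C}|$, and $\mathfrak{q}+(x_i,x_j)$ has height at least $|\mathcal{C}|+1$. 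The reason is that $x_i,x_j$ lie only in the cell $C$, and a minimal prime containing both would have to kill that cell's binomial by variables rather than by a minor, which costs an extra generator. This would contradict $\height\mathfrak{p}\le|\mathcal{C}|$.

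The main obstacle is precisely this last height estimate. It requires reading off the minimal primes of a generalized $X$-pentomino from \cite{HHO} (the convex-collection structure, i.e.\ inner minors on the complement of certain variable sets), and checking that adding the two free variables of an end cell always raises the height. If this is awkward in general, I would instead obtain (ii$'$) inductively through the exact sequence for $f_E$. An associated prime of $S/I=T/f_ET$ has the form $P\supseteq \mathfrak{p}T+(f_E)$ of height one more than $\mathfrak{p}T$. From this I would argue directly that $x_a,x_b$ (and, for the other arm ends, their variables via the inductive hypothesis) cannot both lie in $P$.
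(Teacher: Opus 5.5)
\textbf{There is a genuine gap: your height-based proof of (ii$'$) is false for end cells not adjacent to $D$, and your fallback does not replace it.} Your regular-element step is fine. If no associated prime of $S_0/I_0$ contains both $x_u,x_w$, then $f_E$ is regular on $T$ and (i) follows. But you genuinely need (ii$'$), not just (ii): when the arm containing $E$ has length at least $3$, the cell $E'$ is not adjacent to $D$.

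Your height argument cannot prove (ii$'$). It is false that every minimal prime of $I$ has height at least $|\mathcal{C}|$. Let $\mathfrak{q}$ be generated by the four variables $x_v$, $v\in V(D)$, together with the $2$-minors of each arm with its first cell removed. Each cell adjacent to $D$ has a vertex of $D$ in each monomial of its binomial, so $\mathfrak{q}\supseteq I$. Moreover $\mathfrak{q}$ is a sum of primes in disjoint sets of variables, so $\height\mathfrak{q}=4+(|\mathcal{C}|-5)=|\mathcal{C}|-1$. Now let $E$ be the end cell of an arm of length $\ell\geq 2$, with free vertices $a,b$. Adding $x_a,x_b$ to the ideal of $2$-minors of the corresponding $2\times\ell$ matrix just deletes its last column. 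Hence $\height(\mathfrak{q}+(x_a,x_b))=|\mathcal{C}|$, and any minimal prime of $I$ contained in $\mathfrak{q}$ violates your estimate $\height(\mathfrak{q}+(x_i,x_j))\geq|\mathcal{C}|+1$. The height argument works only for cells adjacent to $D$, which is exactly why the paper states (ii) only for those.

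Your fallback does not close the gap. The claim that associated primes of $T/f_ET$ have height one more than some $\mathfrak{p}T$ is unjustified: embedded primes of a non-Cohen--Macaulay quotient can sit higher. ``Via the inductive hypothesis'' for the other arm ends gives no mechanism either. Knowing that no associated prime of $T$ contains $(x_c,x_d)$ does not pass to $T/f_ET$ in general; you would need grade at least $2$.

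The paper avoids (ii$'$) altogether. It removes the cell $D''$ at distance two from $D$, not the end cell. Then $\mathcal{C}\setminus\{D''\}$ is a smaller generalized $X$-pentomino $\mathcal{C}''$, whose truncated arm ends at the cell $D'$ adjacent to $D$, together with a vertex-disjoint row of cells $\mathcal{P}$, which is a complete intersection. Regularity of $f_{D''}$ then needs only (ii) for $\mathcal{C}''$, combined with the description of $\Ass$ of a sum of ideals in disjoint variables and a bigrading argument.

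Alternatively, you can repair your own route. For an end cell $E$ of an arm of length at least $2$, remove $E$ itself. Any prime $P\supseteq(x_a,x_b)$ has $\depth T_P\geq 2$, since $x_a,x_b$ are variables over $S_0/I_0$. Hence $\depth(T/f_ET)_P\geq 1$, so $P\notin\Ass(S/I)$. For end cells adjacent to $D$, use the height argument as in the paper.
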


\begin{proof}
We prove both parts simultaneously by induction on $|\mathcal{C}|$. It follows from \cite[Theorem 8.11]{HHO} that the ideal generated by all inner $2$-minors \cite[p.~248]{HHO} of $\mathcal{C}$ is a minimal prime of $I_{\rm adj}(\mathcal{C})$ of height $|\mathcal{C}|$. Hence, ${\rm depth}(S/I_{\rm adj}(\mathcal{C}))\leq |V(\mathcal{C})|-|\mathcal{C}|$. So, in order to prove (i), we only need to prove ${\rm depth}(S/I_{\rm adj}(\mathcal{C}))\geq |V(\mathcal{C})|-|\mathcal{C}|$.

If $\mathcal{C}$ is the $X$-pentomino, then (i) and (ii) follow from Lemmata \ref{Xpent} and \ref{associated}. So, suppose that $|\mathcal{C}|\geq 6$. Recall that $D$ denotes the central cell of $\mathcal{C}$. Since $|\mathcal{C}|\geq 6$, without loss of generality, we may assume that there is a row of distinct cells $\{D,D',D''\}$ in $\mathcal{C}$, for which $D'$ is adjacent to $D$ and $D''$ is adjacent to $D'$. To simplify the notation, let $V(D')\cap V(D'')=\{1,2\}$, $V(D'')\setminus V(D')=\{3,4\}$ and the adjacent $2$-minor corresponding to $D''$ is $x_1x_4-x_2x_3$.  Set $\mathcal{C}':=\mathcal{C}\setminus\{D''\}$, $I:=I_{\rm adj}(\mathcal{C})$ and $I':=I_{\rm adj}(\mathcal{C}')S$. Note that $\mathcal{C}'$ is the disjoint union of a generalized $X$-pentomino $\mathcal{C}''$ and a (possibly empty) row of cells, say, $\mathcal{P}$.  Since the ideal of adjacent $2$-minors of a row of cells is a complete intersection, it follows from the induction hypothesis that$${\rm depth}(S/I')=|V(\mathcal{C})|-|\mathcal{C}'|=|V(\mathcal{C})|-|\mathcal{C}|+1.$$

\medskip
\noindent
{\bf Claim.} $x_1x_4-x_2x_3$ is regular on $S/I'$.

\medskip
\noindent
({\it Proof of the claim.}) We prove that for any prime ideal $\mathfrak{p}'\in\Ass{S/I'}$, one has $x_1x_4-x_2x_3\notin \mathfrak{p}'$. By contradiction, assume that $x_1x_4-x_2x_3\in \mathfrak{p}'$. We define a bigrading on $S$ as follows. For each variable $x_k$ we define $\deg(x_k)=(r,t)$ if the vertex $k$ belongs to row $r$ and column $t$ (of a fixed $m \times n$ matrix $X = (x_{ij})_{i=1,\ldots,m \atop  j=1,\ldots,n}$).  Then $I'$ is a homogeneous ideal with respect to this bigrading. Since $\mathfrak{p}'\in\Ass{S/I'}$, it follows that $\mathfrak{p}'$ must be homogeneous. In particular, if a linear form $\ell$ belongs to $\mathfrak{p}'$, then any variable appearing in $\ell$ must belong to $\mathfrak{p}'$. On the other hand, since the ideal of adjacent $2$-minors of a row (or a column) of cells is a complete intersection, one has $\Ass{S/I_{\rm adj}(\mathcal{P})}=\Min{I_{\rm adj}(\mathcal{P})}$. Moreover, it follows from \cite[Theorem 2.5]{HNTT} that $\mathfrak{p}'=\mathfrak{q}_1+\mathfrak{q}_2$, where $\mathfrak{q}_1\in\Ass{S/I_{\rm adj}(\mathcal{C}'')}$ and $\mathfrak{q}_2\in\Ass{S/I_{\rm adj}(\mathcal{P})}=\Min{I_{\rm adj}(\mathcal{P})}$. It follows from the structure of minimal primes of $I_{\rm adj}(\mathcal{P})$ \cite[Theorem 8.11]{HHO}, that $x_3, x_4\notin \mathfrak{q}_2$. We deduce from $x_1x_4-x_2x_3\in \mathfrak{p}'=\mathfrak{q}_1+\mathfrak{q}_2$ that there are linear forms $\ell,\ell'\in \mathfrak{q}_1$ involving $x_1,x_2$, respectively. Hence, our previous observation implies that $x_1,x_2\in \mathfrak{q}_1$, contradicting our induction hypothesis on (ii). This proves the claim.

\medskip

Now, the short exact sequence$$0\longrightarrow \frac{S}{(I':x_1x_4-x_2x_3)}\longrightarrow \frac{S}{I'}\longrightarrow \frac{S}{I}\longrightarrow 0$$
yields that
\[
\begin{array}{rl}
{\rm depth}(S/I)\geq\min\{{\rm depth}(S/(I':x_1x_4-x_2x_3))-1,{\rm depth}(S/I')\}.
\end{array} \tag{4} \label{4}
\]
The claim guarantees that $(I': x_1x_4-x_2x_3)=I'$. It then follows from  (\ref{4}) and ${\rm depth}(S/I')=|V(\mathcal{C})-|C|+1$ that ${\rm depth}(S/I_{\rm adj}(\mathcal{C}))\geq |V(\mathcal{C})|-|\mathcal{C}|$, proving (i).

We prove (ii). Suppose that $C$ is a cell of $\mathcal{C}$ which is adjacent to $D$ and has two free vertices $i,j$.  We deduce from (i) that $\height{\mathfrak{p}}\leq |\mathcal{C}|$. There is a minimal prime ideal $\mathfrak{q}$ of $I$ 
with $\mathfrak{q}\subseteq \mathfrak{p}$. It follows from the structure of minimal prime ideals \cite[Theorem 8.11]{HHO}, that $\height{(\mathfrak{q}+(x_i,x_j))}\geq |\mathcal{C}|+1$. Since $\mathfrak{q}\subseteq \mathfrak{p}$ and $\height{\mathfrak{p}}\leq |\mathcal{C}|$, we conclude that either $x_1\notin\mathfrak{p}$ or $x_2\notin\mathfrak{p}$, proving (ii).
\end{proof}

\begin{Corollary} \label{pd}
Let $\mathcal{C}$ be a generalized $X$-pentomino. Then$${\rm pd}(S/I_{\rm adj}(\mathcal{C}))=\bight{I_{\rm adj}(\mathcal{C})}=|\mathcal{C}|.$$
 \end{Corollary}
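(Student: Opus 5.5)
By the Auslander--Buchsbaum formula and Theorem \ref{depth}(i),
$$\pd(S/I_{\rm adj}(\mathcal{C}))=|V(\mathcal{C})|-{\rm depth}(S/I_{\rm adj}(\mathcal{C}))=|V(\mathcal{C})|-(|V(\mathcal{C})|-|\mathcal{C}|)=|\mathcal{C}|.$$
Here $S$ is the polynomial ring in exactly $|V(\mathcal{C})|$ variables, so $\dim S=|V(\mathcal{C})|$. This settles the first equality once we know the big height is also $|\mathcal{C}|$.

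For the big height, I would prove two inequalities. The lower bound $\bight I_{\rm adj}(\mathcal{C})\ge|\mathcal{C}|$ is already in the proof of Theorem \ref{depth}: by \cite[Theorem 8.11]{HHO}, the ideal generated by all inner $2$-minors of $\mathcal{C}$ is a minimal prime of height $|\mathcal{C}|$. For the upper bound I would use the general fact that every minimal prime $\mathfrak{q}$ of $I=I_{\rm adj}(\mathcal{C})$ is an associated prime of $S/I$. The ideal $\mathfrak{q}$ is also graded, being minimal over a homogeneous ideal. For a graded associated prime, $\dim S/\mathfrak{q}\ge{\rm depth}(S/I)$, since depth is bounded by the dimension of $S/\mathfrak{p}$ for every associated prime $\mathfrak{p}$. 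Hence
$$\height\mathfrak{q}=|V(\mathcal{C})|-\dim S/\mathfrak{q}\le|V(\mathcal{C})|-{\rm depth}(S/I)=|\mathcal{C}|.$$
(Equivalently, $\height\mathfrak{q}\le\pd(S/I)$ for every associated prime.) Thus $\bight I=|\mathcal{C}|$.

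There is no real obstacle beyond Theorem \ref{depth}(i), which carries all the weight. The only point needing care is citing the standard inequality ${\rm depth}(M)\le\dim S/\mathfrak{p}$ for $\mathfrak{p}\in\Ass M$, in the graded setting over the polynomial ring localized at, or graded by, the irrelevant ideal. This inequality is exactly what was used in the proof of Lemma \ref{associated} and of part (ii) of Theorem \ref{depth} to bound heights of associated primes.
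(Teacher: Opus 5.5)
Your proof is correct and follows the paper's route. Auslander--Buchsbaum with Theorem \ref{depth}(i) gives $\pd(S/I_{\rm adj}(\mathcal{C}))=|\mathcal{C}|$, the inner $2$-minor prime gives $\bight I_{\rm adj}(\mathcal{C})\geq|\mathcal{C}|$, and the remaining inequality is the general fact $\bight I\leq\pd(S/I)$. The paper simply cites that fact, whereas you derive it from ${\rm depth}(S/I)\leq\dim S/\mathfrak{q}$ for associated primes $\mathfrak{q}$; this is the same bound used in Theorem \ref{depth}(ii).
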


\begin{proof}
The equality ${\rm pd}(S/I_{\rm adj}(\mathcal{C}))=|\mathcal{C}|$ follows from Theorem \ref{depth} together with Auslander-Buchsbaum formula. Furthermore, a general fact on projective dimension and on big height says that ${\rm pd}(S/I_{\rm adj}(\mathcal{C}))\geq \bight{I_{\rm adj}(\mathcal{C})}$. Since the ideal generated by all inner $2$-minors of $\mathcal{C}$ is a minimal prime ideal of $I_{\rm adj}(\mathcal{C})$ of height $|\mathcal{C}|$, the other inequality ${\rm pd}(S/I_{\rm adj}(\mathcal{C}))\leq \bight{I_{\rm adj}(\mathcal{C})}$ follows.
\, \, \, \, \, \, \, \, \, \, \, \, \, \, \, \, \, \, \, \, \,
\end{proof}

\begin{Theorem} \label{reg}
Let $\mathcal{C}$ be a generalized $X$-pentomino. Then ${\rm reg}(I_{\rm adj}(\mathcal{C}))=|\mathcal{C}|$.
\end{Theorem}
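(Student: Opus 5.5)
We argue by induction on $|\mathcal{C}|$, in the same way as in the proof of Theorem \ref{depth}. For the base case, $\mathcal{C}$ is the $X$-pentomino, and Lemma \ref{Xpent} gives ${\rm reg}(I_{\rm adj}(\mathcal{C}))=5=|\mathcal{C}|$.

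Now let $|\mathcal{C}|\geq 6$ and keep the notation of the proof of Theorem \ref{depth}. We choose a row (or column) $\{D,D',D''\}$ with $D''$ at the end of an arm, so that $D''$ has the two free vertices $3,4$ and $\mathcal{P}=\emptyset$; such an end cell exists because $|\mathcal{C}|\geq 6$ and we may take an arm of length at least $2$. Put $\mathcal{C}'=\mathcal{C}\setminus\{D''\}$, which is again a generalized $X$-pentomino, and $I'=I_{\rm adj}(\mathcal{C}')S$. Since $3,4$ are new variables, adding them does not change the regularity, so the induction hypothesis gives ${\rm reg}(I')=|\mathcal{C}|-1$. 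The Claim in the proof of Theorem \ref{depth} shows that $f=x_1x_4-x_2x_3$ is a nonzerodivisor on $S/I'$. Hence
$$0\to S/I'(-2)\xrightarrow{\ f\ } S/I'\to S/I\to 0$$
is exact, and $S/I\cong S/I'\otimes S/(f)$ resolves by the tensor product of the two resolutions. Therefore
$${\rm reg}(S/I)={\rm reg}(S/I')+1 \quad\text{and}\quad {\rm reg}(I)={\rm reg}(I')+1=|\mathcal{C}|.$$

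To justify both inequalities more carefully, we use the mapping cone of multiplication by $f$. This gives ${\rm reg}(S/I)\le {\rm reg}(S/I')+1$. Regularity is exactly additive under quotienting by a regular element of degree $d$: for $d=2$ we get ${\rm reg}(S/(I',f))={\rm reg}(S/I')+1$, which can be seen from Hilbert series, or equivalently from the graded Betti numbers of the tensor product of the minimal free resolutions, since $\Tor$ vanishes. As a further check on the lower bound, we can use the retract argument of Lemma \ref{Xpent}: $\mathcal{C}$ contains the row $\mathcal{C}_0$ of its longest arm together with the central column. One could also note that $S/I_{\rm adj}(\mathcal{C}')$ is an algebra retract of $S/I$ by \cite[Corollary 2.5]{OHH}, but that only gives ${\rm reg}\geq |\mathcal{C}|-1$, so the regular sequence argument is the one that actually yields equality.

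We expect the main obstacle to be the reduction step, namely guaranteeing that the removed cell $D''$ can always be chosen as an end cell, so that $\mathcal{P}$ is empty. If $\mathcal{P}\ne\emptyset$ is forced, we instead use the splitting $I'=I_{\rm adj}(\mathcal{C}'')+I_{\rm adj}(\mathcal{P})$ in disjoint variables. Then ${\rm reg}(S/I')={\rm reg}(S/I_{\rm adj}(\mathcal{C}''))+{\rm reg}(S/I_{\rm adj}(\mathcal{P}))=(|\mathcal{C}''|-1)+|\mathcal{P}|=|\mathcal{C}|-2$, and adding $f$ gives ${\rm reg}(S/I)=|\mathcal{C}|-1$, that is, ${\rm reg}(I)=|\mathcal{C}|$. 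Here we use that the ideal of adjacent $2$-minors of a row of cells is a complete intersection of quadrics. So in either case the same formula results. We also note that the regularity claim in the base case comes from Lemma \ref{Xpent} via ${\rm reg}(S/I)={\rm reg}(I)-1=4$.
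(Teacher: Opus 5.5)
Your argument is correct and is essentially the paper's proof: you induct on $|\mathcal{C}|$, remove the cell $D''$ two steps from the central cell, use the Claim from Theorem~\ref{depth} to see that $x_1x_4-x_2x_3$ is regular on $S/I'$ so that ${\rm reg}$ of the quotient rises by exactly one, and handle a leftover row $\mathcal{P}$ as a complete intersection of quadrics. One slip: your assertion that $D''$ can always be chosen as an end cell is false when no arm has length exactly $2$ (e.g.\ arms of lengths $1,1,1,3$), but your fallback for $\mathcal{P}\neq\emptyset$ is exactly how the paper treats that case, so nothing is missing.
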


\begin{proof}
We work with induction on $|\mathcal{C}|$. If $\mathcal{C}$ is $X$-pentomino, then the assertion follows from Lemma \ref{Xpent}. Let $|\mathcal{C}|\geq 6$. We follow the same notations as introduced in the proof of Theorem \ref{depth}.  We consider the short exact sequence$$0\longrightarrow \frac{S}{(I':x_1x_4-x_2x_3)}(-2)\longrightarrow \frac{S}{I'}\longrightarrow \frac{S}{I}\longrightarrow 0.$$
We know $(I':x_1x_4-x_2x_3)=I'$.  Applying the above short exact sequence yields that ${\rm reg}(S/I)={\rm reg}(S/I')+1$. Recall that $I'=I_{\rm adj}(\mathcal{C}')$ and $\mathcal{C}'$ is the disjoint union of a generalized $X$-pentomino $\mathcal{C}''$ and a (possibly empty) row of cells $\mathcal{P}$. Since the ideal of adjacent $2$-minors of $\mathcal{P}$ is a complete intersection, it follows from the induction hypothesis that ${\rm reg}(S/I')=|\mathcal{C}'|-1=|\mathcal{C}|-2.$ Thus ${\rm reg}(S/I)={\rm reg}(S/I')+1 = |\mathcal{C}|-1$, in other words, ${\rm reg}(I_{\rm adj}(\mathcal{C})=|\mathcal{C}|$, as desired.
\, \, \, \, \, \, \, \, \, \, \, \, \, \, \, \, \, \, \, \, \,\, \, \, \,
\end{proof}

\begin{Example}
{\em
Let $\Cc, \Cc'$ and $\Cc''$ denote the collections of cells of Figure $4$ with $|\Cc|=7, |\Cc'|=8$ and $|\Cc''|=9$.  One has
\[
\reg(I_{\adj}(\Cc))=7, \, \, \,
\reg(I_{\adj}(\Cc'))=7, \, \, \,
\reg(I_{\adj}(\Cc''))=8
\]
and
\[
\depth(S/I_{\adj}(\Cc))=9, \, \, \,
\depth(S/I_{\adj}(\Cc'))=10, \, \, \,
\depth(S/I_{\adj}(\Cc''))=11.
\]
}
\end{Example}

\begin{figure}
\centering
\begin{tikzpicture}[scale=0.5]
\filldraw [fill=black!15!white,draw=black] (0,0) rectangle (1,1);
\filldraw [fill=black!15!white,draw=black] (1,0) rectangle (2,1);
\filldraw [fill=black!15!white,draw=black] (2,0) rectangle (3,1);
\filldraw [fill=black!15!white,draw=black] (3,0) rectangle (4,1);
\filldraw [fill=black!15!white,draw=black] (1,-1) rectangle (2,0);
\filldraw [fill=black!15!white,draw=black] (1,1) rectangle (2,2);
\filldraw [fill=black!15!white,draw=black] (3,1) rectangle (4,2);
\end{tikzpicture} \ \ \ \ \ \ \ \ \ \
\begin{tikzpicture}[scale=0.5]
\filldraw [fill=black!15!white,draw=black] (0,0) rectangle (1,1);
\filldraw [fill=black!15!white,draw=black] (1,0) rectangle (2,1);
\filldraw [fill=black!15!white,draw=black] (2,0) rectangle (3,1);
\filldraw [fill=black!15!white,draw=black] (3,0) rectangle (4,1);
\filldraw [fill=black!15!white,draw=black] (1,-1) rectangle (2,0);
\filldraw [fill=black!15!white,draw=black] (1,1) rectangle (2,2);
\filldraw [fill=black!15!white,draw=black] (3,1) rectangle (4,2);
\filldraw [fill=black!15!white,draw=black] (3,-1) rectangle (4,0);
\end{tikzpicture} \ \ \ \ \ \ \ \ \ \
\begin{tikzpicture}[scale=0.5]
\filldraw [fill=black!15!white,draw=black] (0,0) rectangle (1,1);
\filldraw [fill=black!15!white,draw=black] (1,0) rectangle (2,1);
\filldraw [fill=black!15!white,draw=black] (2,0) rectangle (3,1);
\filldraw [fill=black!15!white,draw=black] (3,0) rectangle (4,1);
\filldraw [fill=black!15!white,draw=black] (1,-1) rectangle (2,0);
\filldraw [fill=black!15!white,draw=black] (1,1) rectangle (2,2);
\filldraw [fill=black!15!white,draw=black] (3,1) rectangle (4,2);
\filldraw [fill=black!15!white,draw=black] (3,-1) rectangle (4,0);
\filldraw [fill=black!15!white,draw=black] (4,0) rectangle (5,1);
\end{tikzpicture}
\caption{Collections of cells.}
\end{figure}

\section{Binomial ideals arising from finite lattices}
Let $L$ be a finite lattice and $K[L]=K[x_a : a \in L]$ the polynomial ring in $|L|$ variables over a field $K$.  Let $f_{a,b} = x_a x_b - x_{a \wedge b} x_{a \vee b}$, where $a,b \in L$.  The {\em join-meet ideal} of $L$ is the binomial ideal
\[
I_L = (f_{a,b} : a, b \in L).
\]
In \cite{Hibi}, it is shown that $I_L$ is a prime ideal if and only if $L$ is a distributive lattice.  Furthermore, when $L$ is distributive, the quotient ring $K[L]/I_L$ is normal and Cohen--Macaulay.  On the other hand, $I_L$ has linear resolution if and only if $L = D_{2 \cdot 3^s}$, $s=1,2,\ldots$\,, the divisor lattice \cite[p.~157]{HHgtm260} of $2\cdot 3^s$ (\cite[Theorem 4.2]{EHH}).  If $L$ is a modular nondistributive lattice, then $\reg(I_L) \geq 4$ and $I_L$ is not linearly related (\cite[Theorem 2.2]{DEH}).

\begin{Example}
\label{EX2.2}
{\em
Each lattice $L$ of Figure $5$ is nonmodular with $\reg(I_L) = 3$.
}
\end{Example}

\begin{figure}
\centering
\begin{tikzpicture}[scale=0.8]
\coordinate (a) at (0,0) {};
\coordinate (b) at (-1,1.5) {};
\coordinate (c) at (0,3) {};
\coordinate (d) at (1,2) {};
\coordinate (e) at (1,1) {};
\draw(a)--(b)--(c)--(d)--(e)--cycle;
\draw[fill=white] (a) circle [radius=1.3mm];
\draw[fill=white] (b) circle [radius=1.3mm];
\draw[fill=white] (c) circle [radius=1.3mm];
\draw[fill=white] (d) circle [radius=1.3mm];
\draw[fill=white] (e) circle [radius=1.3mm];
\end{tikzpicture} \ \ \ \ \ \ \ \ \ \ \ \ \ \ \ \ \
\begin{tikzpicture}[scale=0.8]
\coordinate (a) at (0,0) {};
\coordinate (b) at (-1,1) {};
\coordinate (c) at (-1,2) {};
\coordinate (d) at (0,3) {};
\coordinate (e) at (1,2) {};
\coordinate (f) at (1,1) {};
\coordinate (g) at (0,2) {};
\coordinate (h) at (0,1) {};
\draw(a)--(b)--(c)--(d)--(e)--(f)--cycle;
\draw(b)--(e);
\draw(c)--(h)--(g)--(f);
\draw(d)--(g);
\draw(h)--(a);
\draw[fill=white] (a) circle [radius=1.3mm];
\draw[fill=white] (b) circle [radius=1.3mm];
\draw[fill=white] (c) circle [radius=1.3mm];
\draw[fill=white] (d) circle [radius=1.3mm];
\draw[fill=white] (e) circle [radius=1.3mm];
\draw[fill=white] (f) circle [radius=1.3mm];
\draw[fill=white] (g) circle [radius=1.3mm];
\draw[fill=white] (h) circle [radius=1.3mm];
\end{tikzpicture}
\caption{Nonmodular lattices.}
\end{figure}

\begin{Example}
\label{aaaaa}
{\em
    The lattice $L$ of Figure $6$ is a semimodular lattice with $\reg(I_L) =4$.  Since $\beta_{1,4}(I_L)=5$, it follows that $I_L$ is not linearly related.
    }
\end{Example}

\begin{figure}
\centering
\begin{tikzpicture}[scale=0.8]
\node[draw,shape=circle] (1) at (0,0) {};
\node[draw,shape=circle] (2) at (-1,1) {};
\node[draw,shape=circle] (3) at (-2,2) {};
\node[draw,shape=circle] (4) at (0,4) {};
\node[draw,shape=circle] (5) at (0,2) {};
\node[draw,shape=circle] (6) at (2,2) {};
\node[draw,shape=circle] (7) at (1,1) {};
\draw(1)--(2)--(3)--(4)--(6)--(7)--(1);
\draw(4)--(5);
\draw(2)--(5)--(7);
\end{tikzpicture}
\caption{A semimodular lattice.}
\end{figure}

Is there a nonmodular lattice $L$ for which $I_L$ is linearly related?

\begin{Example}
\label{sixEX}
{\em
The join--meet ideal of each nonmodular lattice with at most six elements (\cite[Figure 3.5]{ECI}) is not linearly related.  Among them, $\reg(I_L) = 3$ if and only if $L$ is one of the lattices of Figure $7$.
}
\end{Example}

\begin{figure}
\centering
\begin{tikzpicture}[scale=0.5]
\coordinate (a) at (0,0) {};
\coordinate (b) at (-1,1) {};
\coordinate (c) at (-1,2) {};
\coordinate (d) at (0,3) {};
\coordinate (e) at (1,1.5) {};
\draw(a)--(b)--(c)--(d)--(e)--cycle;
\foreach \a in{a,...,e}
\draw[fill=black] (\a) circle [radius=1mm];
\end{tikzpicture} \ \ \ \ \ \ \ \ \
\begin{tikzpicture}[scale=0.5]
\coordinate (a) at (0,0) {};
\coordinate (b) at (-1,1) {};
\coordinate (c) at (-1,2) {};
\coordinate (d) at (0,3) {};
\coordinate (e) at (1,1.5) {};
\coordinate (f) at (0,4) {};
\draw(a)--(b)--(c)--(d)--(e)--cycle;
\draw(d)--(f);
\foreach \a in{a,...,f}
\draw[fill=black] (\a) circle [radius=1mm];
\end{tikzpicture} \ \ \ \ \ \ \ \ \
\begin{tikzpicture}[scale=0.5]
\coordinate (a) at (0,0) {};
\coordinate (b) at (-1,1) {};
\coordinate (c) at (-1,2) {};
\coordinate (d) at (0,3) {};
\coordinate (e) at (1,1.5) {};
\coordinate (f) at (0,-1) {};
\draw(a)--(b)--(c)--(d)--(e)--cycle;
\draw(a)--(f);
\foreach \a in{a,...,f}
\draw[fill=black] (\a) circle [radius=1mm];
\end{tikzpicture} \ \ \ \ \ \ \ \ \
\begin{tikzpicture}[scale=0.5]
\coordinate (a) at (0,0) {};
\coordinate (b) at (-1,0.75) {};
\coordinate (c) at (-1,1.5) {};
\coordinate (d) at (-1,2.25) {};
\coordinate (e) at (0,3) {};
\coordinate (f) at (1,1.5) {};
\draw(a)--(b)--(c)--(d)--(e)--(f)--cycle;
\foreach \a in{a,...,f}
\draw[fill=black] (\a) circle [radius=1mm];
\end{tikzpicture} \ \ \ \ \ \ \ \ \
\begin{tikzpicture}[scale=0.5]
\coordinate (a) at (0,0) {};
\coordinate (b) at (-1,1) {};
\coordinate (c) at (-1,2) {};
\coordinate (d) at (0,3) {};
\coordinate (e) at (1,2) {};
\coordinate (f) at (1,1) {};
\draw(a)--(b)--(c)--(d)--(e)--(f)--cycle;
\foreach \a in{a,...,f}
\draw[fill=black] (\a) circle [radius=1mm];
\end{tikzpicture}
\caption{Lattices with $\reg(I_L)=3$.}
\end{figure}

\begin{Conjecture}
    {\em
The join--meet ideal of a nonmodular lattice is not linearly related.
    }
\end{Conjecture}

Let $2 \leq n \leq m $ and $D_{2^n \cdot 3^m}$ the divisor lattice of $2^n \cdot 3^m$.  Let $L_{n,m}$ denote the meet-semilattice (\cite[p.~286]{ECI}) consisitng of those $2^a\cdot 3^b$ with $a+b \leq m$.  Set ${L_{n,m}^\sharp} := L_{n,m} \cup \{{\hat 1}\}$, where ${\hat 1} \not\in L_{n,m}$ is the unique maximal element of ${L_{n,m}^\sharp}$.  It follows from \cite[Proposition 3.3.1]{ECI} that ${L_{n,m}^\sharp}$ is a lattice.  In fact, ${L_{n,m}^\sharp}$ is a nonmodular lattice.  For example, ${L_{2,2}^\sharp}$ is the nonmodular lattice of Figure $8$.

\begin{figure}
\centering
\begin{tikzpicture}[scale=0.7]
\coordinate (a) at (0,0) {};
\coordinate (b) at (1,-1) {};
\coordinate (c) at (2,-2) {};
\coordinate (d) at (3,-3) {};
\coordinate (e) at (4,-2) {};
\coordinate (f) at (3,-1) {};
\coordinate (g) at (2,0) {};
\coordinate (h) at (1,1) {};
\coordinate (i) at (2,2) {};
\coordinate (j) at (3,1) {};
\coordinate (k) at (4,0) {};
\coordinate (l) at (5,-1) {};
\coordinate (m) at (6,0) {};
\coordinate (n) at (5,1) {};
\coordinate (o) at (4,2) {};
\coordinate (p) at (5,3) {};
\coordinate (q) at (6,2) {};
\coordinate (r) at (7,1) {};
\coordinate (s) at (8,2) {};
\draw(a)--(b)--(c)--(d)--(e)--(f)--(g)--(h)--(i)--(j)--(k)--(l)--(m)--(n)--(o)--(p)--(q)--(r)--(s)--(p)--(i);
\draw(a)--(h);
\draw(b)--(g)--(j)--(o);
\draw(c)--(f)--(k)--(n)--(q);
\draw(e)--(l);
\draw(m)--(r);
\draw[fill=white] (a) circle [radius=2mm];
\draw[fill=white] (b) circle [radius=2mm];
\draw[fill=white] (c) circle [radius=2mm];
\draw[fill=white] (d) circle [radius=2mm];
\draw[fill=white] (e) circle [radius=2mm];
\draw[fill=white] (f) circle [radius=2mm];
\draw[fill=white] (g) circle [radius=2mm];
\draw[fill=white] (h) circle [radius=2mm];
\draw[fill=white] (i) circle [radius=2mm];
\draw[fill=white] (j) circle [radius=2mm];
\draw[fill=white] (k) circle [radius=2mm];
\draw[fill=white] (l) circle [radius=2mm];
\draw[fill=white] (m) circle [radius=2mm];
\draw[fill=white] (n) circle [radius=2mm];
\draw[fill=white] (o) circle [radius=2mm];
\draw[fill=white] (p) circle [radius=2mm];
\draw[fill=white] (q) circle [radius=2mm];
\draw[fill=white] (r) circle [radius=2mm];
\draw[fill=white] (s) circle [radius=2mm];
\end{tikzpicture}
\caption{The nonomodular lattice ${L_{3,5}^\sharp}$.}
\end{figure}

\begin{Theorem}
\label{Lnmsharp}
Let $2 \leq n \leq m $.  The join-meet ideal of the nonmodular lattice ${L_{n,m}^\sharp}$ is not linearly related.
\end{Theorem}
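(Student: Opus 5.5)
Our plan is to exhibit a nonzero quadratic Betti number $\beta_{1,4}(I_L)\neq 0$ for $L={L_{n,m}^\sharp}$, using restriction to a small sublattice together with the retract technique from \cite[Corollary 2.5]{OHH} already used in Lemma \ref{Xpent}. Recall that $I_L$ is linearly related exactly when $\beta_{1,j}(I_L)=0$ for all $j\neq 3$. So it suffices to find a subset $M\subseteq L$ such that
\begin{enumerate}[(a)]
\item $M$ is a sublattice with $I_M=I_L\cap K[M]$;
\item $K[M]/I_M$ is an algebra retract of $K[L]/I_L$;
\item $I_M$ is not linearly related.
\end{enumerate}
Retracts have Betti numbers dominated by those of the ambient ring, in the sense $\beta_{i,j}(I_M)\le\beta_{i,j}(I_L)$. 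This is the standard sharpening of \cite[Corollary 2.5]{OHH}.

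A natural source of retracts for join-meet ideals is intervals. Take $M=[a,b]$ and let $\pi\colon K[L]\to K[M]$ send $x_c\mapsto x_c$ for $c\in M$. For $c\notin M$ we would like a map of the form $x_c\mapsto x_{(c\vee a)\wedge b}$, but this is not a ring retract in general. We will therefore instead use the grading argument available whenever $M$ is an interval in a ranked situation: $I_L$ is multigraded by the "rank/position" grading, and the linear strand of the degree components supported on $M$ is computed by $I_M$. More concretely, both $L_{n,m}$ and the top $\hat 1$ are fine-graded by the exponent vector $(a,b)$ of $2^a3^b$, with $\hat 1$ given a separate new coordinate. With this grading we can restrict the minimal free resolution of $I_L$ to multidegrees supported on $M$; this is the restriction lemma for squarefree/multigraded objects. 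It gives $\beta_{i,\mathbf{d}}(I_M)=\beta_{i,\mathbf{d}}(I_L)$ for degrees $\mathbf{d}$ involving only variables of $M$, provided that $M$ is an interval (an order-convex set closed under $\wedge,\vee$ in $L$).

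The choice of $M$ is forced by the nonmodularity. Near the top, $L$ contains elements $2^a3^b$ with $a+b=m$, whose pairwise joins are all $\hat 1$. Take the interval $M=[2^{n-1}3^{m-n},\hat 1]$, or a similar top interval. It contains the three maximal elements $u=2^{n}3^{m-n}$, $v=2^{n-1}3^{m-n+1}$, $w$ of rank $m$ below $\hat1$, together with their meets. We expect this interval to be one of the six-element nonmodular lattices of Figure 7 or a pentagon-with-extra-atoms. For those lattices, Example \ref{sixEX} records that $I_M$ is not linearly related. Once the correct small interval is identified, we simply invoke Example \ref{sixEX} (a computer-checked fact) to get $\beta_{1,j}(I_M)\ne0$ for some $j\ge4$, and conclude via the restriction of Betti numbers.

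The main obstacle is the step asserting that the Betti numbers of $I_M$ embed into those of $I_L$. The multigrading by exponent vectors is not fine enough to separate elements, because $x_cx_d$ and $x_{c\wedge d}x_{c\vee d}$ must have the same degree. We would therefore argue via the retract instead: define $\pi(x_c)=x_{\phi(c)}$, where $\phi(c)=(c\vee a)\wedge b$ for the interval $[a,b]$. We then check directly that $\pi(f_{c,d})\in I_M$ for all $c,d$, using that $\phi$ is a lattice homomorphism onto $[a,b]$ when $b=\hat1$. With $b=\hat 1$ we have $\phi(c)=c\vee a$, and $c\mapsto c\vee a$ preserves joins. Preserving meets is exactly what fails in nonmodular lattices, so we will choose $a$ so that this map is a lattice homomorphism on $L$. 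If that fails, we fall back to a direct computation: take the two Koszul-type syzygies among the generators $f_{u,v},f_{u,w},f_{v,w}$, all of which involve $x_{\hat1}$, and show that the resulting degree-4 syzygy is not generated by linear ones. This mirrors the computation $\beta_{1,4}=5$ in Example \ref{aaaaa}.
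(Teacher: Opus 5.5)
Your framework is the right one: find a sublattice $M$ with $I_M$ not linearly related, realize $K[M]/I_M$ as an algebra retract of $K[L]/I_L$, and use $\beta_{i,j}(I_M)\le\beta_{i,j}(I_L)$. This is exactly what \cite[Proposition 2.1]{DEH}, used in the paper, packages. However, neither concrete ingredient is actually in place.

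\textbf{The interval.} Your interval $[2^{n-1}3^{m-n},\hat 1]$ consists only of $2^{n-1}3^{m-n}$, $2^{n}3^{m-n}$, $2^{n-1}3^{m-n+1}$ and $\hat 1$. It is the Boolean lattice $B_2$ and contains only two maximal elements of $L_{n,m}$. Its join-meet ideal is principal, hence vacuously linearly related. Nor can you land on a lattice covered by Example \ref{sixEX}. An interval $[a,b]$ with $b\neq\hat 1$ is a sublattice of the divisor lattice, hence distributive, and one checks that every nonmodular interval $[a,\hat 1]$ has at least seven elements. The interval that works is
$[3^{m-2},\hat 1]=\{3^{m-2},\,2\cdot 3^{m-2},\,3^{m-1},\,4\cdot 3^{m-2},\,2\cdot 3^{m-1},\,3^{m},\,\hat 1\}$, which uses $n\ge 2$. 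It is the seven-element lattice of Figure 6, and $\beta_{1,4}=5$ there by Example \ref{aaaaa}.

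\textbf{The retraction.} You never produce a valid retraction. The map $c\mapsto c\vee a$ is not a meet-homomorphism in general. For $n\ge 3$, take $a=3^{m-2}$, $c=2^3$, $d=2\cdot 3^{m-1}$. Then $\pi(f_{c,d})=x_{\hat 1}(x_{2\cdot 3^{m-1}}-x_{2\cdot 3^{m-2}})$. This is not in $I_M$, because $2\cdot 3^{m-1}$ is a coatom of $M$, so the monomial $x_{\hat 1}x_{2\cdot 3^{m-1}}$ occurs in no generator of $I_M$. For $n=2$ the map does happen to work, since then $c\vee 3^{m-2}\neq\hat 1$ for all $c\neq\hat 1$, but this does not cover the theorem. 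You abandon the multigrading route yourself. The fallback, showing by hand that some degree-$4$ syzygy of $I_L$ is not generated by linear ones, requires control of all linear syzygies of $I_L$, which is precisely what a retract is meant to avoid.

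\textbf{The missing idea.} Use the trivial retraction: $x_c\mapsto x_c$ for $c\in M$ and $x_c\mapsto 0$ for $c\notin M$. If $c,d$ are not both in $M$, then $\pi(x_cx_d)=0$. Since $c\wedge d\le c,d\le c\vee d$ and $M$ is an interval, $c\wedge d$ and $c\vee d$ cannot both lie in $M$, so $\pi(x_{c\wedge d}x_{c\vee d})=0$ as well. Hence $\pi(I_L)\subseteq I_M$, which also gives $I_L\cap K[M]=I_M$, so $K[M]/I_M$ is an algebra retract. Then \cite[Corollary 2.5]{OHH} gives $\beta_{1,4}(I_L)\ge 5$. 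With the correct interval, this is precisely the paper's argument, since intervals are induced sublattices in the sense of \cite{DEH}.
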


\begin{proof}
The interval $[3^{m-2}, {\hat 1}]$ of ${L_{n,m}^\sharp}$ is the nonmodular lattice of Figure $7$.  One easily see that $[3^{m-2}, {\hat 1}]$ is an {\em induced} sublattice (\cite[p.~485]{DEH}) of ${L_{n,m}^\sharp}$.  Since the join--meet ideal of $[3^{m-2}, {\hat 1}]$ is not linearly related (Example \ref{aaaaa}), it follows from \cite[Proposition 2.1]{DEH} that the join--meet ideal of ${L_{n,m}^\sharp}$ is not linearly related. \, \, \, \,
\end{proof}

We now turn to the discussion of finite pure lattices of rank $3$  (\cite[pp.~115--116]{Hibired}) with at least seven elements.  Let $L$ be a finite pure lattice of rank $3$ with $|L| \geq 7$ and ${\hat 0}$ (resp. ${\hat 1}$) its unique minimal (resp. maximal) element.  We say that $L$ is {\em connected} if the bipartite graph $G_L := L \setminus \{{\hat 0}, {\hat 1}\}$ is connected.


\begin{figure}
\centering
\begin{tikzpicture}[scale=1.2]
\coordinate (a) at (0,0) {};
\coordinate (b) at (-3,1.5) {};
\coordinate (c) at (-2,1.5) {};
\coordinate (d) at (0,1.5) {};
\coordinate (e) at (1,1.5) {};
\coordinate (f) at (2,1.5) {};
\coordinate (g) at (-4,3) {};
\coordinate (h) at (-3,3) {};
\coordinate (i) at (-2,3) {};
\coordinate (j) at (-1,3) {};
\coordinate (k) at (0,3) {};
\coordinate (l) at (2,3) {};
\coordinate (m) at (3,3) {};
\coordinate (n) at (4,3) {};
\coordinate (o) at (5,3) {};
\coordinate (p) at (0,4.5) {};
\foreach \k in{b,...,f}
\draw(a)--(\k);
\foreach \a in{g,...,i}
\draw(b)--(\a);
\foreach \b in{i,j,l}
\draw(c)--(\b);
\foreach \c in{i,k}
\draw(d)--(\c);
\draw(e)--(l);
\foreach \e in{k,...,o}
\draw(f)--(\e);
\foreach \f in{g,...,o}
\draw(p)--(\f);
\foreach \g in{a,...,p}
\draw[fill=white] (\g) circle [radius=1.1mm];
\draw(i)node[right=1mm]{$x_1$};
\draw(k)node[right=1mm]{$x_2$};
\draw(l)node[right=0.1mm]{$x_3$};
\draw(c)node[left=0.1mm]
{$y_3$};
\draw(d)node[right=0.5mm]{$y_1$};
\draw(f)node[below=0.5mm]{$y_2$};
\end{tikzpicture}
\\
\bigskip
\begin{tikzpicture}[scale=1.5]
\coordinate (a) at (0,0) {};
\coordinate (b) at (-1,0.5) {};
\coordinate (c) at (-1,1.5) {};
\coordinate (d) at (0,2) {};
\coordinate (e) at (1,1.5) {};
\coordinate (f) at (1,0.5) {};
\coordinate (g) at (2,2) {};
\coordinate (h) at (2,1) {};
\coordinate (i) at (2,0) {};
\coordinate (j) at (1,-0.5) {};
\coordinate (k) at (-2,2) {};
\coordinate (l) at (-2,1) {};
\coordinate (m) at (-3,1.5) {};
\coordinate (n) at (0,3) {};
\draw(a)--(b)--(c)--(d)--(e)--(f)--cycle;
\draw(e)--(g);
\draw(f)--(h);
\draw(f)--(i);
\draw(f)--(j);
\draw(c)--(k)--(m);
\draw(k)--(l);
\draw(d)--(n);
\foreach \k in{a,...,n}
\draw[fill=white] (\k) circle [radius=1mm];
\draw(a)node[above=1mm]{$x_2$};
\draw(b)node[above right=0.1mm]{$y_1$};
\draw(c)node[below right=0.1mm]{$x_1$};
\draw(d)node[below=1mm]{$y_3$};
\draw(e)node[below left=0.1mm]{$x_3$};
\draw(f)node[above left=0.1mm]{$y_2$};
\end{tikzpicture}
\caption{A finite connected pure lattice of rank $3$ and its bipartite graph.}
\end{figure}

Let $L$ be a finite connected pure lattice of rank $3$ with at least seven elements and $G_L$ its bipartite graph.  Let $[n]$ be the vertex set of $G_L$ and $$S=K[x_a : a \in L]=K[x_1, \ldots, x_n, x_{\hat 0}, x_{\hat 1}]$$ the polynomial ring in $n+2$ variables over a field $K$. Let $\preceq$ be the reverse lexicographic order on $S$ induced by the ordering $x_1\geq \cdots \geq x_n\geq x_{\hat 0}\geq x_{\hat 1}$  (\cite[Example 2.1.2 (b)]{HHgtm260}).

\begin{Lemma} \label{GBcycle}
Suppose that $G_L$ is a cycle $C_{2n}$ of length $2n\geq 8$ with vertex set $[2n]=\{1,2,\ldots, 2n\}$ and with edge set$$E(C_{2n})=\big\{\{1,2\},\{2,3\}, \ldots, \{2n-1,2n\},\{1,2n\}\big\}.$$ Then
\begin{align*}
\mathcal{G} & =\{x_ax_b-x_{a\wedge b}x_{a\vee b} : a,b \ {\rm are \ incomparable \ in} \ L\}\\ & \, \, \, \, \, \, \, \, \, \, \, \, \, \, \, \, \, \, \bigcup \, \{x_{\hat 0}x_ax_{\hat 1}-x_{\hat 0}x_{\hat 1}^2 : a\in L\setminus \{{\hat 1}\}\}
\end{align*}
is the reduced Gr\"obner bases of the join-meet ideal $I_{L}$ of $L$ with respect to $\preceq$.
\end{Lemma}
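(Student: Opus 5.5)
The plan is to verify Buchberger's criterion directly. First I fix the combinatorics of $L$. Since $G_L=C_{2n}$ with $2n\geq 8$, the odd and even vertices split into $n\geq 4$ atoms and $n\geq 4$ coatoms; I take the atoms to be the odd vertices, which is harmless up to relabelling. Each atom is covered by exactly its two cycle-neighbours. The incomparable pairs $\{a,b\}$ then fall into four types:
\begin{enumerate}[(a)]
\item two atoms at cycle distance $2$, with $a\wedge b={\hat 0}$ and $a\vee b$ their common coatom;
\item two atoms at distance $\geq 4$, with $a\vee b={\hat 1}$;
\item two coatoms at distance $2$, with meet their common atom and join ${\hat 1}$;
\item two coatoms at distance $\geq 4$, or an atom and a coatom not covering it, with $a\wedge b={\hat 0}$ and $a\vee b={\hat 1}$.
\end{enumerate}
For the order $\preceq$, $x_{\hat 1}$ is the smallest variable. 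This gives ${\rm in}(x_ax_b-x_{a\wedge b}x_{a\vee b})=x_ax_b$ in every case, because the trailing term contains $x_{\hat 1}$ or ${\hat 0}$-variables below $x_a,x_b$; in case (a) I compare degrees in the smallest variables. It also gives ${\rm in}(x_{\hat 0}x_ax_{\hat 1}-x_{\hat 0}x_{\hat 1}^2)=x_{\hat 0}x_ax_{\hat 1}$.

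The second step is to show $\mathcal{G}\subset I_L$, i.e.\ $x_{\hat 0}x_{\hat 1}(x_a-x_{\hat 1})\in I_L$ for every $a\neq{\hat 1}$. Call two monomials \emph{congruent} when their difference lies in $I_L$. The basic trick uses an incomparable pair $(e,f)$ of type (d) together with a third element $g$ such that both $(g,e)$ and $(g,f)$ are of type (d). Multiplying the generators $f_{e,f}$, $f_{g,e}$ and $f_{g,f}$ by $x_g$, $x_f$ and $x_e$ respectively gives
\[
x_{\hat 0}x_{\hat 1}x_g\equiv x_gx_ex_f\equiv x_{\hat 0}x_{\hat 1}x_f\equiv x_{\hat 0}x_{\hat 1}x_e .
\]
Because $2n\geq 8$, such triples exist in abundance. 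Hence $x_{\hat 0}x_{\hat 1}x_a\equiv x_{\hat 0}x_{\hat 1}x_b$ for all $a,b\in G_L$, since the graph of these moves is connected.

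To reach $x_{\hat 0}x_{\hat 1}^2$, I use a type (c) pair $c,c'$ with common atom $a$, which gives $x_cx_{c'}\equiv x_ax_{\hat 1}$. Multiplying by $x_{\hat 0}$ and combining this with the previous congruences (and with a type (d) relation $x_{\hat 0}x_{\hat 1}\equiv x_ex_f$) should yield $x_{\hat 0}x_{\hat 1}x_a\equiv x_{\hat 0}x_{\hat 1}^2$. The case $a={\hat 0}$ then follows by the same moves. I expect this membership computation, which needs an explicit short chain, to be the first delicate point. If the chain is not obvious, I will compute it inside the small configuration of six consecutive cycle vertices plus ${\hat 0},{\hat 1}$.

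The main step is to check that all $S$-polynomials reduce to $0$. Only pairs whose leading terms share a variable matter, by Buchberger's first criterion. The quadric--quadric pairs are $x_ax_b$ and $x_ax_c$ with $b,c$ both incomparable to $a$, giving $S=x_cx_{a\wedge b}x_{a\vee b}-x_bx_{a\wedge c}x_{a\vee c}$. I will split into cases according to the types of $(a,b)$ and $(a,c)$, with the cycle geometry limiting $b,c$ to a few patterns. In each case the terms are either reduced by quadrics or contain $x_{\hat 0}x_{\hat 1}x_\ast$, which is exactly where the cubic elements of $\mathcal{G}$ enter. The cubic--quadric and cubic--cubic pairs are handled similarly; there the reductions end in multiples of $x_{\hat 0}x_{\hat 1}^2$ that cancel.

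Finally, reducedness is checked by observing two things. No leading term $x_ax_b$ divides another leading term: the cubic leading terms $x_{\hat 0}x_ax_{\hat 1}$ contain no incomparable pair, since ${\hat 0}$ and ${\hat 1}$ are comparable to everything. The trailing monomials $x_{a\wedge b}x_{a\vee b}$ and $x_{\hat 0}x_{\hat 1}^2$ are divisible by no leading term. I expect the case analysis of $S$-pairs to be the main labour, with the type (a)/(c) interactions near a common vertex being the main obstacle.
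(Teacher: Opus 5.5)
Your strategy matches the paper's: first prove $\mathcal{G}\subseteq I_L$ by explicit congruences, then apply Buchberger's criterion, then check reducedness. Your leading-term and reducedness checks are correct. The gap is in the membership step $\mathcal{G}\subseteq I_L$, which is where the cubic elements and the hypothesis $2n\geq 8$ enter. As written it does not go through.

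\textbf{The triple trick fails at $2n=8$.} You require $e,f,g$ to be pairwise of type (d), but for $2n=8$ no such triple exists. The only coatom pairs at distance $\geq 4$ are $\{2,6\}$ and $\{4,8\}$, and every atom is covered by one member of each pair. Allowing type (b) pairs does not help either. So ``such triples exist in abundance'' is false in the smallest case. The fix is to drop the condition on $(e,f)$. If $g$ is generic (meet ${\hat 0}$, join ${\hat 1}$) with both $e$ and $f$, then $x_ff_{g,e}-x_ef_{g,f}=x_{\hat 0}x_{\hat 1}(x_e-x_f)$. When $2n\geq 8$, two neighbours on the cycle always have such a $g$; this is the paper's $x_2f_{1,6}-x_1f_{2,6}$.

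\textbf{The step to $x_{\hat 0}x_{\hat 1}^2$ is missing, and your hinted mechanism cannot supply it.} Multiplying the type (c) relation by $x_{\hat 0}$ is the wrong move. The only binomials $f_{a,b}$ acting on $x_{\hat 0}x_{\hat 1}^2$ replace $x_{\hat 0}x_{\hat 1}$ by $x_ex_f$ with $(e,f)$ generic. From $x_{\hat 1}x_ex_f$, the only move that does not return to $x_{\hat 0}x_{\hat 1}^2$ is the type (c) relation $x_ex_{\hat 1}\equiv x_cx_{c'}$ (with $e$ an atom and $c,c'$ its covers), multiplied by $x_f$. So the (c)-relation must be multiplied by a variable of $G_L$ that is generic with $e$ and with one of $c,c'$. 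The paper's certificates are
\[
x_{2n-1}f_{2,4}-x_2f_{4,2n-1}+x_{\hat 1}f_{3,2n-1}=x_{\hat 0}x_2x_{\hat 1}-x_{\hat 0}x_{\hat 1}^2,\qquad x_1f_{4,2n-1}-x_4f_{1,2n-1}-x_{\hat 0}f_{4,2n}=x_{\hat 0}^2x_{\hat 1}-x_{\hat 0}x_1x_{\hat 1}.
\]
The second is the dual identity for $a={\hat 0}$, where the type (a) relation is multiplied by $x_4$. Both use $2n\geq 8$ to guarantee that $\{4,2n-1\}$, $\{3,2n-1\}$ and $\{4,2n\}$ are generic pairs. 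Your ingredients of type (c) and type (d) relations are the right ones; only the multiplier is wrong. For instance, $x_6f_{2,4}-x_4f_{2,6}+x_{\hat 1}f_{3,6}=x_{\hat 0}x_4x_{\hat 1}-x_{\hat 0}x_{\hat 1}^2$ also works.

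\textbf{The $S$-pair verification is only announced.} Your case split is the right one and it does close up, but it has to be carried out, as the paper does in its eight cases. For example, $f_{1,3}$ against $f_{1,b}$, with $b$ an atom at distance $\geq 4$ from $1$, reduces via $x_{\hat 0}f_{2,b}$ to $x_{\hat 0}x_3x_{\hat 1}-x_{\hat 0}^2x_{\hat 1}$. Finishing that reduction needs both the cubic for $a=3$ and the cubic for $a={\hat 0}$. By contrast, $f_{1,3}$ against $f_{1,4}$ gives exactly $-x_{\hat 0}f_{2,4}$, a multiple of a type (c) element.
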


\begin{proof}
To simplify the notation, set $I:=I_L$. Let $U=\{1, 3, \ldots, 2n-1\}$ and $V=\{2,4,\dots, 2n\}$. Without loss of generality, we may assume that where ${\hat 0} < a < b < {\hat 1}$, for any $a\in U$ and $b\in V$ with
$\{a,b\}\in E(G_L)$.
First we show that $\mathcal{G}$ is contained in $I$. It is enough to prove that $x_{\hat 0}x_ax_{\hat 1}-x_{\hat 0}x_{\hat 1}^2\in I$ for each $a\in L\setminus \{{\hat 1}\}$. Let $a\neq {\hat 0}$. Then $a\in [2n]$. If $a\in V$, then by symmetry, one may assume that $a=2$. One has
\begin{align*}
& \, x_{2n-1}f_{2,4}-x_2f_{4,2n-1}+x_{\hat 1}f_{3,2n-1}\\ = & \, x_{2n-1}(x_2x_4-x_3x_{\hat 1})-x_2(x_4x_{2n-1}-x_{\hat 0}x_{\hat 1})+x_{\hat 1}(x_3x_{2n-1}-x_{\hat 0}x_{\hat 1})\\ = & \, x_{\hat 0}x_2x_{\hat 1}-x_{\hat 0}x_{\hat 1}^2.
\end{align*}
Hence, $x_{\hat 0}x_2x_{\hat 1}-x_{\hat 0}x_{\hat 1}^2\in I$. If $a\in U$, then one may assume that $a=1$. One has
\begin{align*}
& x_2f_{1,6}-x_1f_{2,6}=x_2(x_1x_6-x_{\hat 0}x_{\hat 1})-x_1(x_2x_6-x_{\hat 0}x_{\hat 1})=x_{\hat 0}x_1x_{\hat 1}-x_{\hat 0}x_2x_{\hat 1}
\end{align*}
Since we have already shown that $x_{\hat 0}x_2x_{\hat 1}-x_{\hat 0}x_{\hat 1}^2\in I$, the above equalities imply that $x_{\hat 0}x_1x_{\hat 1}-x_{\hat 0}x_{\hat 1}^2\in I$. Let $a={\hat 0}$. Then
\begin{align*}
& \, x_1f_{4,2n-1}-x_4f_{1,2n-1}-x_{\hat 0}f_{4,2n}\\ = & \, x_1(x_4x_{2n-1}-x_{\hat 0}x_{\hat 1})-x_4(x_1x_{2n-1}-x_{\hat 0}x_{2n})-x_{\hat 0}(x_4x_{2n}-x_{\hat 0}x_{\hat 1})\\ = & \, x_{\hat 0}^2x_{\hat 1}-x_{\hat 0}x_1x_{\hat 1}.
\end{align*}
Since, $x_{\hat 0}x_1x_{\hat 1}-x_{\hat 0}x_{\hat 1}^2\in I$, we conclude from the above equalities that $x_{\hat 0}^2x_{\hat 1}-x_{\hat 0}x_{\hat 1}^2\in I$. 
As a consequence, $\mathcal{G}\subseteq I$.

To prove that $\mathcal{G}$ is a Gr\"obner bases of $I$ with respect to $\preceq$, we must show that for any pair of polynomials $g_1, g_2\in \mathcal{G}$,  the $S$-polynomial $S(g_1,g_2)$ reduces to zero with respect to $\mathcal{G}$ (\cite[Theorem 2.3.2]{HHgtm260}). We divide the following cases.

\medskip

\noindent
{\bf (Case 1.)} Let $g_1=f_{a_1,b_1}$ and $g_2=f_{a_2,b_2}$, where $a_1,b_1,a_2,b_2\in U$. If $\{a_1,b_1\}\cap\{a_2,b_2\}=\emptyset$, then ${\rm in}_{\preceq}(g_1)$ and ${\rm in}_{\preceq}(g_2)$ are relatively prime. Thus, $S(g_1,g_2)$ reduces to zero with respect to $\mathcal{G}$. Let $\{a_1,b_1\}\cap\{a_2,b_2\}\neq\emptyset$. One may assume that $a_1=a_2:=a$. By symmetry, we assume that $a=1$. Furthermore, one may assume that $b_1< b_2$. The following subcases arise.

{\bf Subcase 1.1.} If $b_1=3$ and $b_2=2n-1$, then $g_1=x_1x_3-x_{\hat 0}x_2$ and $g_2=x_1x_{2n-1}-x_{\hat 0}x_{2n}$. Hence,
\begin{align*}
S(g_1,g_2)=& \, x_{\hat 0}x_3x_{2n}-x_{\hat 0}x_2x_{2n-1}\\ =& \,x_{\hat 0}(x_3x_{2n}-x_{\hat 0}x_{\hat 1})-x_{\hat 0}(x_2x_{2n-1}-x_{\hat 0}x_{\hat 1})\\ = & \, x_{\hat 0}f_{3,2n}-x_{\hat 0}f_{2,2n-1}
\end{align*}
reduces to zero with respect to $\mathcal{G}$.

{\bf Subcase 1.2.} If $b_1=3$ and $b_2\in U\setminus \{1, 3, 2n-1\}$, then $g_1=x_1x_3-x_{\hat 0}x_2$ and $g_2=x_1x_{b_2}-x_{\hat 0}x_{\hat 1}$. Hence,
\begin{align*}
S(g_1,g_2)=& \, x_{\hat 0}x_3x_{\hat 1}-x_{\hat 0}x_2x_{b_2}\\ =&\,(x_{\hat 0}x_3x_{\hat 1}-x_{\hat 0}x_{\hat 1}^2)-(x_{\hat 0}^2x_{\hat 1}-x_{\hat0}x_{\hat 1}^2)-x_{\hat 0}(x_2x_{b_2}-x_{\hat 0}x_{\hat 1})\\ =&\,(x_{\hat 0}x_3x_{\hat 1}-x_{\hat 0}x_{\hat 1}^2)-(x_{\hat 0}^2x_{\hat 1}-x_{\hat0}x_{\hat 1}^2)-x_{\hat 0}f_{2,b_2}
\end{align*}
reduces to zero with respect to $\mathcal{G}$.

{\bf Subcase 1.3.} If $b_2=2n-1$ and $b_1\in U\setminus \{1, 3, 2n-1\}$, then the argument is similar to that of Subcase 1.2.

{\bf Subcase 1.4.} If $b_1,b_2\in U\setminus \{1, 3, 2n-1\}$, then $g_1=x_1x_{b_1}-x_{\hat 0}x_{\hat 1}$ and $g_2=x_1x_{b_2}-x_{\hat 0}x_{\hat 1}$. Hence,
\begin{align*}
& S(g_1,g_2)=x_{\hat 0}x_{b_1}x_{\hat 1}-x_{\hat 0}x_{b_2}x_{\hat 1}=(x_{\hat 0}x_{b_1}x_{\hat 1}-x_{\hat 0}x_{\hat 1}^2)-(x_{\hat 0}x_{b_2}x_{\hat 1}-x_{\hat 0}x_{\hat 1}^2)
\end{align*}
reduces to zero with respect to $\mathcal{G}$.

\medskip
\noindent
{\bf (Case 2.)} Let $g_1=f_{a_1,b_1}$ and $g_2=f_{a_2,b_2}$, where $a_1,b_1,a_2,b_2\in V$. A similar argument as in (Case 1) shows that $S(g_1,g_2)$ reduces to zero with respect to $\mathcal{G}$.

\medskip

\noindent
{\bf (Case 3.)} Let $g_1=f_{a_1,b_1}$ and $g_2=f_{a_2,b_2}$, where $a_1,b_1\in U$ and $a_2, b_2\in V$. Since ${\rm in}_{\preceq}(g_1)$ and ${\rm in}_{\preceq}(g_2)$ are relatively prime, $S(g_1,g_2)$ reduces to zero with respect to $\mathcal{G}$.

\medskip
\noindent
{\bf (Case 4.)} Let $g_1=f_{a_1,b_1}$ and $g_2=f_{a_2,b_2}$, where $a_1,a_2\in U$ and $b_1, b_2\in V$. If $a_1\neq a_2$ and $b_1\neq b_2$, then ${\rm in}_{\preceq}(g_1)$ and ${\rm in}_{\preceq}(g_2)$ are relatively prime. 
So, suppose that either $a_1=a_2$ or $b_1=b_2$. Let $a_1=a_2:=a$, as the argument in the other case is similar. One may assume that $a=1$ and $b_1< b_2$. Since $a$ is not comparable with $b_1,b_2$, one has $b_1,b_2\notin \{2, 2n\}$. Then $g_1=x_1x_{b_1}-x_{\hat 0}x_{\hat 1}$ and $g_2=x_1x_{b_2}-x_{\hat 0}x_{\hat 1}$. Hence,
\begin{align*}
S(g_1,g_2)=x_{b_1}x_{\hat 0}x_{\hat 1}-x_{b_2}x_{\hat 0}x_{\hat 1}=(x_{b_1}x_{\hat 0}x_{\hat 1}-x_{\hat 0}x_{\hat 1}^2)-(x_{b_2}x_{\hat 0}x_{\hat 1}-x_{\hat 0}x_{\hat 1}^2)
\end{align*}
reduces to zero with respect to $\mathcal{G}$.

\medskip
\noindent
{\bf (Case 5.)} Let $g_1=f_{a_1,b_1}$ and $g_2=f_{a_2,b_2}$, where $a_1,a_2,b_1\in U$ and $b_2\in V$. If $a_1\neq a_2$, then ${\rm in}_{\preceq}(g_1)$ and ${\rm in}_{\preceq}(g_2)$ are relatively prime. 
So, suppose that $a_1=a_2:=a$. One may assume that $a=1$. If $1\vee b_1={\hat 1}$, then the argument is the same as in (Case 4). Let $1\vee b_1\neq {\hat 1}$. In other words, either $b_1=3$ or $b_1=2n-1$. By symmetry, one ma assume that $b_1=3$. One has $g_1=x_1x_3-x_2x_{\hat 0}$ and $g_2=x_1x_{b_2}-x_{\hat 0}x_{\hat 1}$. Since $1$ and $b_2$ are incomparable in $L$, one has $b_2\neq 2, 2n$. If $b_2=4$, then
\begin{align*}
 S(g_1,g_2)=x_3x_{\hat 0}x_{\hat 1}-x_2x_{b_2}x_{\hat 0}=x_3x_{\hat 0}x_{\hat 1}-x_2x_4x_{\hat 0}=-x_{\hat 0}(x_2x_4-x_3x_{\hat 1})=-x_{\hat 0}f_{2,4}
\end{align*}
reduces to zero with respect to $\mathcal{G}$. If $b_2\neq 4$, then
\begin{align*}
S(g_1,g_2) =& \, x_3x_{\hat 0}x_{\hat 1}-x_2x_{b_2}x_{\hat 0}\\ = &\, (x_3x_{\hat 0}x_{\hat 1}-x_{\hat 0}x_{\hat 1}^2)-(x_{\hat 0}^2x_{\hat 1}-x_{\hat 0}x_{\hat 1}^2)-x_{\hat 0}(x_2x_{b_2}-x_{\hat 0}x_{\hat 1})
\end{align*}
reduces to zero with respect to $\mathcal{G}$.

\medskip
\noindent
{\bf (Case 6.)} Let $g_1=f_{a_1,b_1}$ and $g_2=f_{a_2,b_2}$, where $a_1,a_2,b_1\in V$ and $b_2\in U$. A similar argument as in (Case 5) shows that $S(g_1,g_2)$ reduces to zero with respect to $\mathcal{G}$.

\medskip
\noindent
{\bf (Case 7.)} Let $g_1=f_{a_1,b_1}$ and $g_2=x_{\hat 0}x_ax_{\hat 1}-x_{\hat 0}x_{\hat 1}^2$. If $a\notin\{a_1,b_1\}$, then ${\rm in}_{\preceq}(g_1)$ and ${\rm in}_{\preceq}(g_2)$ are relatively prime. 
Let $a\in\{a_1,b_1\}$ and, say, $a=a_1$.  Let, say, $a\in U$ and, by symmetry, suppose that $a=1$. If $1\vee b_1={\hat 1}$, then $g_1=x_1x_{b_1}-x_{\hat 0}x_{\hat 1}$. Hence,
\begin{align*}
S(g_1,g_2)=x_{b_1}x_{\hat 0}x_{\hat 1}^2-x_{\hat 0}^2x_{\hat 1}^2=x_{\hat 1}(x_{b_1}x_{\hat 0}x_{\hat 1}-x_{\hat 0}x_{\hat 1}^2)-x_{\hat 1}(x_{\hat 0}^2x_{\hat 1}-x_{\hat 0}x_{\hat 1}^2)
\end{align*}
reduces to zero with respect to $\mathcal{G}$. If $1\vee b_1\neq {\hat 1}$, then either $b_1=3$ or $b_1=2n-1$. By symmetry suppose that $b_1=3$. Then $g_1=x_1x_3-x_2x_{\hat 0}$ and
\begin{align*}
S(g_1,g_2)=&\,x_3x_{\hat 0}x_{\hat 1}^2-x_2x_{\hat 0}^2x_{\hat 1}\\ = & \, x_{\hat 1}(x_3x_{\hat 0}x_{\hat 1}-x_{\hat 0}x_{\hat 1}^2)-x_{\hat 1}(x_{\hat 0}^2x_{\hat 1}-x_{\hat 0}x_{\hat 1}^2)-x_{\hat 0}(x_2x_{\hat 0}x_{\hat 1}-x_{\hat 0}x_{\hat 1}^2)
\end{align*}
reduces to zero with respect to $\mathcal{G}$.

\medskip

\noindent
{\bf (Case 8.)} Let $g_1=x_{\hat 0}x_ax_{\hat 1}-x_{\hat 0}x_{\hat 1}^2$ and $g_2=x_{\hat 0}x_bx_{\hat 1}-x_{\hat 0}x_{\hat 1}^2$, where $a,b\in L\setminus\{{\hat 1}\}$. Then
\begin{align*}
& S(g_1,g_2)=x_ax_{\hat 0}x_{\hat 1}^2-x_bx_{\hat 0}x_{\hat 1}^2=x_{\hat 1}(x_ax_{\hat 0}x_{\hat 1}-x_{\hat 0}x_{\hat 1}^2)-x_{\hat 1}(x_bx_{\hat 0}x_{\hat 1}-x_{\hat 0}x_{\hat 1}^2)
\end{align*}
reduces to zero with respect to $\mathcal{G}$.
\hspace{7.4cm}
\end{proof}

\begin{Lemma} \label{GBadd}
Working in the same situation as in Lemma \ref{GBcycle},
\begin{align*}
\mathcal{G} & =\{x_ax_b-x_{a\wedge b}x_{a\vee b} : a,b \ {\rm are \ incomparable \ in} \ L\} \bigcup \{x_{\hat 0}x_{\hat 1}\}
\end{align*}
is a Gr\"obner bases of $(I_{L}, x_{\hat 0}x_{\hat 1})$ with respect to $\preceq$.
\end{Lemma}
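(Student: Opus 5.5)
Throughout, set $J:=(I_L, x_{\hat 0}x_{\hat 1})$ and write $\mathcal{G}$ for the set in the statement. I would argue in the same way as in the proof of Lemma~\ref{GBcycle}. First, $\mathcal{G}\subseteq J$ is immediate, since each $f_{a,b}$ lies in $I_L$. Conversely $J$ is generated by $\mathcal{G}$: for comparable $a,b$ the binomial $f_{a,b}$ is zero, so $I_L$ is generated by the $f_{a,b}$ with $a,b$ incomparable, which all lie in $\mathcal{G}$. It therefore suffices to check, by Buchberger's criterion \cite[Theorem 2.3.2]{HHgtm260}, that every $S$-polynomial $S(g_1,g_2)$ with $g_1,g_2\in\mathcal{G}$ reduces to zero modulo $\mathcal{G}$.

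The key observation is that for incomparable $a,b$ with $a\wedge b={\hat 0}$ and $a\vee b={\hat 1}$, we have $f_{a,b}=x_ax_b-x_{\hat 0}x_{\hat 1}$. Hence $x_ax_b\in \mathcal{G}$ up to the reduction $f_{a,b}-(-x_{\hat 0}x_{\hat 1})$, i.e.\ $f_{a,b}$ reduces to the monomial $x_ax_b$ modulo $x_{\hat 0}x_{\hat 1}$. The initial terms are as before: ${\rm in}_\preceq(f_{a,b})=x_ax_b$ because $x_{\hat 0},x_{\hat 1}$ are the smallest variables in the revlex order, and ${\rm in}(x_{\hat 0}x_{\hat 1})=x_{\hat 0}x_{\hat 1}$.

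For pairs $g_1,g_2$ of join-meet binomials I would reuse the case analysis (Cases 1--6) of Lemma~\ref{GBcycle}. In each case the $S$-polynomial was written there as a combination of elements $f_{c,d}$ together with the cubic binomials $x_{\hat 0}x_cx_{\hat 1}-x_{\hat 0}x_{\hat 1}^2$. Each such cubic is now a multiple of $x_{\hat 0}x_{\hat 1}\in\mathcal{G}$, so each term has a standard representation with leading terms bounded by ${\rm in}(S(g_1,g_2))$. Thus these $S$-polynomials reduce to zero modulo $\mathcal{G}$. Concretely, in Subcases 1.2 and 1.4, Case 4 and Case 5, every monomial other than those handled by some $f_{c,d}$ is divisible by $x_{\hat 0}x_{\hat 1}$.

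For pairs involving $x_{\hat 0}x_{\hat 1}$, the leading terms $x_ax_b$ and $x_{\hat 0}x_{\hat 1}$ are coprime unless $\{a,b\}\cap\{{\hat 0},{\hat 1}\}\neq\emptyset$, and this is impossible because ${\hat 0},{\hat 1}$ are comparable to everything. So those $S$-polynomials reduce to zero by the coprime criterion.

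The main obstacle is the bookkeeping in re-verifying Cases 1--6. One must make sure that each rewriting of $S(g_1,g_2)$ from Lemma~\ref{GBcycle} is indeed a standard representation once the cubics are replaced by multiples of $x_{\hat 0}x_{\hat 1}$; Subcase 1.2 and the $b_2\neq 4$ part of Case 5 need the most care. Alternatively, these $S$-polynomials could be reduced directly: they are binomials in which every term outside a single $f_{c,d}$ contains $x_{\hat 0}x_{\hat 1}$.
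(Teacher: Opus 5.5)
Your proposal is correct and follows essentially the paper's argument. Pairs involving $x_{\hat 0}x_{\hat 1}$ have coprime leading terms, and for pairs of join--meet binomials you reuse the reductions from Lemma~\ref{GBcycle}, rewriting each cubic $x_{\hat 0}x_ax_{\hat 1}-x_{\hat 0}x_{\hat 1}^2$ as a multiple of $x_{\hat 0}x_{\hat 1}$. The bookkeeping you flag is automatic and does not need case-by-case rechecking: since $S(g_1,g_2)\in I_L$ and $\mathcal{G}'$ is a Gr\"obner basis of $I_L$, any standard representation with respect to $\mathcal{G}'$ becomes one with respect to $\mathcal{G}$ via $h\,(x_{\hat 0}x_ax_{\hat 1}-x_{\hat 0}x_{\hat 1}^2)=h\,(x_a-x_{\hat 1})\cdot x_{\hat 0}x_{\hat 1}$, which leaves every summand, and hence every leading-term bound, unchanged.
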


\begin{proof}
Since $\mathcal{G}\subseteq (I_{L_G}, x_{\hat 0}x_{\hat 1})$, we show that for any $g_1,g_2\in \mathcal{G}$, the S-polynomial $S(g_1,g_2)$ reduces to zero with respect to $\mathcal{G}$. If one of $g_1$ and $g_2$ is $x_{\hat 0}x_{\hat 1}$, then ${\rm in}_{\preceq}(g_1)$ and ${\rm in}_{\preceq}(g_2)$ are relatively prime. Thus $S(g_1,g_2)$ reduces to zero with respect to $\mathcal{G}$.  Let $g_1=f_{a_1,b_1}$ and $f_{a_2,b_2}$, where $a_1,a_2,b_1,b_2\in [2n]$. It follows from Lemma \ref{GBcycle} that $S(g_1,g_2)$ reduces to zero with respect to
\begin{align*}
\mathcal{G}' & =\{x_ax_b-x_{a\wedge b}x_{a\vee b} : a,b \ {\rm are \ incomparable \ in} \ L\}\\ & \, \, \, \, \, \, \, \, \, \, \, \, \, \, \, \bigcup \, \{x_{\hat 0}x_ax_{\hat 1}-x_{\hat 0}x_{\hat 1}^2 : a\in L\setminus \{{\hat 1}\}\}.
\end{align*}
Since any polynomial in $\{x_{\hat 0}x_ax_{\hat 1}-x_{\hat 0}x_{\hat 1}^2 : a\in L_G\setminus \{{\hat 1}\}\}$ is divisible by $x_{\hat 0}x_{\hat 1}$, it follows that $S(g_1,g_2)$ reduces to zero with respect to $\mathcal{G}$.
\hspace{5cm}
\end{proof}

\begin{Lemma}
\label{CCCCC}
Working in the same situation as in Lemma \ref{GBcycle}, one has
\begin{itemize}
    \item[(i)] $\depth({S/I_{L}})=1$;
    \item[(ii)] $\reg(I_{L})=4$.
\end{itemize}
\end{Lemma}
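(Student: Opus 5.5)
The plan is to pass to the initial ideal and work with monomial ideals. Lemma \ref{GBcycle} gives the reduced Gr\"obner basis $\mathcal{G}$ of $I=I_L$ with respect to $\preceq$, so
\[
J:={\rm in}_{\preceq}(I)=(x_ax_b : a,b\in[2n] \text{ incomparable}) + (x_{\hat 0}x_ax_{\hat 1} : a\in L\setminus\{\hat 1\}).
\]
Here $x_ax_b$ is the leading term of $f_{a,b}$, because $x_{\hat 0},x_{\hat 1}$ are the smallest variables. The generators of the second kind are $x_{\hat 0}x_ax_{\hat 1}$ for $a\in[2n]$, together with $x_{\hat 0}^2x_{\hat 1}$.

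For the lower bounds I use Lemma \ref{GBadd}. For the depth, note that $S/I$ has positive depth exactly when $x_{\hat 1}$ (or some element of degree one) is regular. To get $\depth(S/I)\le 1$, I would exhibit an explicit nonzero socle-like element modulo a regular element. Concretely, I would show that $x_{\hat 1}$ is regular on $S/I$, since no monomial generator of $J$ is divisible by $x_{\hat 1}^2$ and $x_{\hat 1}$ should be a nonzerodivisor on $S/J$ after a suitable check (or use $x_{\hat 0}-x_{\hat 1}$). Then I would show that $\depth(S/(I,x_{\hat 1}))=0$. Modulo $x_{\hat 1}$ the ideal becomes $(x_ax_b - x_{a\wedge b}x_{a\vee b})$ with $x_{\hat 1}$ set to $0$. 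I expect $x_{\hat 0}$ to be annihilated by the maximal ideal up to $I$: multiplying $x_{\hat 0}$ by any $x_c$ gives an element of $(I,x_{\hat 1})$, by the relations $x_{\hat 0}x_cx_{\hat 1}\equiv x_{\hat 0}x_{\hat 1}^2$ combined with the explicit syzygies computed in Lemma \ref{GBcycle}. The cleaner route is to work with $J$. Since depth can only drop under Gr\"obner degeneration, $\depth(S/I)\ge\depth(S/J)$, so for the lower bound $\depth\ge1$ it suffices to find a nonzerodivisor on $S/J$. For the upper bound I would verify that $\mathfrak m\in\Ass(S/(I,\ell))$ for a general linear form $\ell$, by producing a witness monomial in $S/(J,x_{\hat 1})$. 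Monomials in $x_{\hat 0}$ and the $x_a$ of one antichain class survive there, so I would instead use the upper bound $\depth(S/I)\le \dim S/I$ together with the structure of the quotient.

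For the regularity, I first get the lower bound $\reg(I)\ge 4$. The element $x_{\hat 0}x_{\hat 1}$ appears in the syzygy computation of Lemma \ref{GBcycle}: the cubic elements $x_{\hat 0}x_ax_{\hat 1}-x_{\hat 0}x_{\hat 1}^2$ are minimal Gr\"obner elements coming from a relation of degree $3$ among quadrics, which yields a nonlinear first syzygy. Alternatively, I would compare with a nonmodular induced sublattice and apply \cite[Proposition 2.1]{DEH} as in Theorem \ref{Lnmsharp}, or take a Betti number $\beta_{1,4}\neq 0$ directly from the three-term relation $x_{2n-1}f_{2,4}-x_2f_{4,2n-1}+x_{\hat 1}f_{3,2n-1}$, which is a minimal relation landing in degree $3$ and therefore gives a second syzygy of degree $4$.

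For the upper bound $\reg(I)\le\reg(J)\le 4$, I would use the splitting by $x_{\hat 0}$ (and then $x_{\hat 1}$) through short exact sequences, as in Lemma \ref{Xpent}. First, $(J,x_{\hat 0})$ is generated by $x_{\hat 0}$ together with the quadrics $x_ax_b$ for incomparable $a,b$ in $[2n]$. This is the edge ideal of the complement of the comparability graph, restricted to $U$ and $V$ separately plus the non-edges between them. Its regularity is governed by the complement graph. I would bound it via linear quotients or co-chordality, aiming for $\le 3$. Second, $(J:x_{\hat 0})$ contains $x_ax_{\hat 1}$ for all $a$ and $x_{\hat 0}x_{\hat 1}$. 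Splitting again on $x_{\hat 1}$ leaves the quadratic part, plus $(x_{\hat 1})$-colon ideals equal to $(x_a : a)+(\ldots)$, which are easy. The main obstacle is the regularity of the quadratic edge ideal on $[2n]$: it is the complement of the bipartite cycle graph with the edges of $C_{2n}$ removed, and I must show that its regularity is at most $3$. I would do this by Fröberg-style reasoning or via an explicit linear quotients order starting with the $U$-clique.
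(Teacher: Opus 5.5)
Your reduction to $J={\rm in}_{\preceq}(I)$ can only ever give half of the statement. From $\beta_{ij}(I)\le\beta_{ij}(J)$ you get $\reg(I)\le\reg(J)$ and $\depth(S/I)\ge\depth(S/J)$. Since $J$ is not squarefree (it contains $x_{\hat 0}^2x_{\hat 1}$), \cite{CV} cannot turn these into equalities.

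Those two halves are salvageable. Write $J_0$ for the Stanley--Reisner ideal of $C_{2n}$, i.e.\ the $x_ax_b$ with $a,b\in[2n]$ incomparable. Then $(J,x_{\hat 0})=J_0+(x_{\hat 0})$ and $J:x_{\hat 0}=J_0+x_{\hat 1}(x_a : a\in L\setminus\{\hat 1\})$, and $\reg(J_0)=3$ gives $\reg(J)\le 4$. Note that $\reg(J_0)=3$ comes from $K[C_{2n}]$ being Gorenstein of dimension $2$, not from a linear-quotients order. Linear quotients would force a linear resolution, which is impossible since $C_{2n}$ is not chordal. Likewise $\depth(S/J)\ge 1$ does hold, because $\mathfrak{m}\notin\Ass(S/J)$.

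However, your candidate nonzerodivisor is wrong. One has $x_{\hat 1}\cdot x_{\hat 0}x_a\in J$ while $x_{\hat 0}x_a\notin J$. Even on $S/I$, both $x_{\hat 1}$ and $x_{\hat 0}-x_{\hat 1}$ are zero-divisors: $x_{\hat 1}\cdot x_{\hat 0}(x_a-x_{\hat 1})\in I$, but $x_{\hat 0}(x_a-x_{\hat 1})\notin I$ because its leading term $x_{\hat 0}x_a$ is not in $J$.

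The genuine gap is the other two inequalities, $\depth(S/I)\le 1$ and $\reg(I)\ge 4$. Nothing in the proposal establishes either.
\begin{itemize}
\item The bound $\depth(S/I)\le\dim S/I$ only gives $3$.
\item Cubic elements in a Gr\"obner basis say nothing about nonlinear syzygies of $I$.
\item The combination $x_{2n-1}f_{2,4}-x_2f_{4,2n-1}+x_{\hat 1}f_{3,2n-1}$ is not a syzygy at all: it equals the nonzero cubic $x_{\hat 0}x_2x_{\hat 1}-x_{\hat 0}x_{\hat 1}^2$.
\item There is no nonmodular interval to compare with, since every proper interval of $L$ is a chain or $B_2$.
\end{itemize}

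The missing idea is the colon ideal $(I:x_{\hat 0}x_{\hat 1})=\mathfrak{p}=(x_a-x_{\hat 1} : a\in L\setminus\{\hat 1\})$. It follows from Lemma \ref{GBcycle} by reducing modulo $\mathfrak{p}$ to a polynomial in $x_{\hat 1}$ alone and noting that $x_{\hat 0}x_{\hat 1}^{d+1}\notin J$. Then $\mathfrak{p}\in\Ass(S/I)$ with $\dim S/\mathfrak{p}=1$, which gives $\depth(S/I)\le 1$. Now consider the exact sequence
\[
0\to S/\mathfrak{p}(-2)\to S/I\to S/(I,x_{\hat 0}x_{\hat 1})\to 0 .
\]
By Lemma \ref{GBadd} the right-hand term has the squarefree initial ideal $J_0+(x_{\hat 0}x_{\hat 1})$, hence depth $3$ and regularity $3$ by \cite{CV}. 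The inequality $\reg(S/(I,x_{\hat 0}x_{\hat 1}))\le\max\{\reg(S/I),\reg(S/\mathfrak{p}(-2))-1\}=\max\{\reg(S/I),1\}$ then forces $\reg(S/I)\ge 3$. The same sequence also yields $\reg(S/I)\le 3$ and $\depth(S/I)\ge 1$ directly, so your splitting of $J$ is not needed.
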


\begin{proof}
Set $I:=I_{L}$.

(i) It follows from Lemma \ref{GBcycle} that
$$(x_a-x_{\hat 1}: a\in L_G\setminus\{\hat 1\})\subseteq (I: x_{\hat 1}x_{\hat 0}).$$We show that the reverse inclusion holds as well.  Let $f\in S$ be a polynomial which belongs to $(I: x_{\hat 1}x_{\hat 0})$. Reducing modulo $(x_a-x_{\hat 1}: a\in L_G\setminus\{\hat 1\})$, one can assume that there is an integer $d\geq 0$ for which $f=c_dx_{\hat 1}^d+\cdots + c_1x_{\hat 1}+c_0$, where $c_d, \ldots, c_1,c_0\in K$. Since $fx_{\hat 0}x_{\hat 1}\in I$, we conclude that$$c_dx_{\hat 0}x_{\hat 1}^{d+1}={\rm in}_{\preceq}(fx_{\hat 0}x_{\hat 1})\in {\rm in}_{\preceq}(I),$$contradicting Lemma \ref{GBcycle}. Hence,$$(I: x_{\hat 1}x_{\hat 0})=(x_a-x_{\hat 1}: a\in L_G\setminus\{\hat 1\}),$$
which shows that $\mathfrak{p}:=(x_a-x_{\hat 1}: a\in L_G\setminus\{\hat 1\})$ is an associated prime of $I$. Consequently, one has $\depth {S/I}\leq \dim S/\mathfrak{p}=1$. To prove the reverse inequality, we consider the short exact sequence
\[
\begin{array}{rl}
\displaystyle0\longrightarrow \frac{S}{(I: x_{\hat 1}x_{\hat 0})}(-2)\longrightarrow \frac{\,S\, }{I}\longrightarrow \frac{S}{(I,x_{\hat 0}x_{\hat 1})}\longrightarrow 0.
\end{array} \tag{5} \label{5}
\]
It yields that$$\depth({S/I})\geq\min\big\{\depth{S/\mathfrak{p}},\depth({S/(I,x_{\hat 0}x_{\hat 1})})\big\}.$$One has $\depth({S/\mathfrak{p}})=1$.
Lemma \ref{GBadd} says that $(I,x_{\hat 0}x_{\hat 1})$ has a squarefree quadratic initial ideal. In fact, this initial ideal can be written as $J+(x_{\hat 0}x_{\hat 1})$, where $J$ is the Stanley--Reisner ideal of $C_{2n}$ (considered as a $1$-dimensional simplicial complex). Hence, $\depth{S/J+(x_{\hat 0}x_{\hat 1}})=2+1=3$.  We deduce from \cite[Corollary 2.7]{CV} that $\depth{S/(I,x_{\hat 0}x_{\hat 1})}=3$. Therefore, $\depth{S/I}\geq 1$. Since we already know that $\depth{S/I}\leq 1$, the assertion follows.

(ii) As the regularity of the Stanley--Reisner ring of $C_{2n}$ is two, a similar argument as in the proof of (i) shows that $\reg{S/(I,x_{\hat 0}x_{\hat 1})}=3$. Since $\reg{S/\mathfrak{p}}=0$, we conclude from the short exact sequence (\ref{5}) that$$\reg({S/I})=\max\big\{\reg{S/\mathfrak{p}}+2,\reg{S/(I,x_{\hat 0}x_{\hat 1})}\big\}=3,$$as desired.
\hspace{12cm}
\end{proof}

\begin{Lemma} \label{GBpath}
Let $L$ be a connected pure lattice of rank $3$ and suppose that $G_L$ is a path $P_{n}$ on $n\geq 7$ vertices. Then
\begin{align*}
\mathcal{G} & =\{x_ax_b-x_{a\wedge b}x_{a\vee b} : a,b \ {\rm are \ incomparable \ in} \ L \}\\ & \, \, \, \, \, \, \, \, \, \, \, \, \, \, \, \bigcup \, \{x_{\hat 0}x_ax_{\hat 1}-x_{\hat 0}x_{\hat 1}^2 : a\in L \setminus \{{\hat 1}\}\}
\end{align*}
is the reduced Gr\"obner bases of $I_{L}$ with respect to $\preceq$.
\end{Lemma}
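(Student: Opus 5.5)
The plan mirrors the proof of Lemma \ref{GBcycle}. First fix notation. Let $G_L=P_n$ have vertices $1,2,\ldots,n$ and edges $\{i,i+1\}$. Let $U$ and $V$ be the two colour classes, and declare that the elements of $U$ are the atoms and those of $V$ the coatoms. (Because the lattice is pure of rank $3$, every vertex of $G_L$ is an atom or a coatom.) The lattice structure can then be read off the path. For two atoms $a,b\in U$, one has $a\vee b=c$ if $c$ is the unique common neighbour of $a$ and $b$ in $P_n$, that is, if $|a-b|=2$; otherwise $a\vee b={\hat 1}$. Meets of two atoms are always ${\hat 0}$. Dually, for $a,b\in V$, one has $a\wedge b=c$ if $|a-b|=2$ and $a\wedge b={\hat 0}$ otherwise, while $a\vee b={\hat 1}$. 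For incomparable $a\in U$ and $b\in V$, that is $|a-b|\ne 1$, one has $a\wedge b={\hat 0}$ and $a\vee b={\hat 1}$.

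The first step is to show $\mathcal{G}\subseteq I_L$, i.e.\ that each cubic $x_{\hat 0}x_ax_{\hat 1}-x_{\hat 0}x_{\hat 1}^2$ lies in $I_L$. The cycle identities used $x_{2n-1}$ and $x_{2n}$, but any coatom and atom far enough from $a$ serve the same purpose. For $a\in V$ I will choose an atom $u$ at distance at least $3$ from $a$ and from a coatom neighbour $a'=a\pm2$. Then I will use the analogue of
\[
x_uf_{a,a'}-x_af_{a',u}+x_{\hat 1}f_{c,u}=x_{\hat 0}x_ax_{\hat 1}-x_{\hat 0}x_{\hat 1}^2,
\]
where $c$ is the common neighbour of $a$ and $a'$. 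Since $n\ge 7$, such $u$ and $a'$ exist, possibly after reflecting the path. For $a\in U$ I will use $x_bf_{a,v}-x_af_{b,v}$, with $b$ a coatom neighbour of $a$ and $v$ a coatom incomparable to both, to reduce to the $V$ case. For $a={\hat 0}$ I will use the three-term identity from Lemma \ref{GBcycle}, relabelled. The length bound $n\ge7$ is exactly what makes these choices possible; where a vertex is an endpoint, I will reflect the path and pick the witness from the other side.

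Next comes Buchberger's criterion \cite[Theorem 2.3.2]{HHgtm260}, with the same case division (Cases 1--8) as in Lemma \ref{GBcycle}. The ordering on $[n]$ is chosen so that the initial term of every $f_{a,b}$, for $a,b$ incomparable, is $x_ax_b$. The initial term of each cubic is $x_{\hat 0}x_ax_{\hat 1}$, because ${\hat 0}$ and ${\hat 1}$ are the smallest variables in the revlex order. Hence any pair with coprime initial terms is immediate. In the remaining pairs, the S-polynomial is a combination of $x_{\hat 0}f_{b,c}$, $x_{\hat 0}$ times a cubic, and differences of cubics. The computations are verbatim those of Subcases 1.1--1.4 and Cases 4--8. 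The only change is that the "near" pairs ($|a-b|=2$, whose join or meet is not ${\hat 1}$ or ${\hat 0}$) now occur on one side only when $a$ is an endpoint or next to one. So I must also check the boundary situations: $a=1$ or $a=n$ has a single partner at distance $2$, and the degenerate Subcase 1.1 (two near partners) is absent there. Reducedness follows as before: no initial term divides another, and the trailing terms $x_{\hat 0}x_{\hat 1}$, $x_{\hat 0}x_c$, $x_cx_{\hat 1}$ and $x_{\hat 0}x_{\hat 1}^2$ are not divisible by any leading term.

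I expect the main obstacle to be the first step near the ends of the path. There one must confirm that suitable far-away witnesses exist for every $a$, including $a$ equal to an endpoint and $a={\hat 0}$, given only $n\ge7$. I will first try to handle an endpoint $a$ by taking the witnesses at the opposite end, and then check the smallest case $n=7$ by hand.
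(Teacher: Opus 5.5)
Your proposal is correct and follows the route the paper intends; the paper only says the argument mirrors Lemma \ref{GBcycle}. Your witnesses for the cubics exist for every $n\ge 7$, and the S-pair reductions are local, so they carry over from the cycle case together with the boundary subcases you list. One small correction in the atom step: $v$ must share no atom with $b$, i.e.\ $b\wedge v={\hat 0}$, so that $f_{b,v}=x_bx_v-x_{\hat 0}x_{\hat 1}$; merely being incomparable to $b$ is automatic for two coatoms and does not suffice. Such a $v$ always exists for $n\ge 7$, provided that when $n=7$ you do not take $b$ to be the middle vertex of the path.
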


\begin{proof}
The argument is similar to that of Lemma \ref{GBcycle}. So, we omit the proof.
\,
\end{proof}

\begin{Lemma}
\label{TTTTT}
Working in the same situation as in Lemma \ref{GBpath}, one has
\begin{itemize}
    \item[(i)] $\depth({S/I_L})=1$;
    \item[(ii)] $\reg({I_L})=3$.
\end{itemize}
\end{Lemma}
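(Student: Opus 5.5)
The plan is to copy the proof of Lemma \ref{CCCCC} step by step, replacing Lemma \ref{GBcycle} by Lemma \ref{GBpath}. Set $I:=I_L$ and $u:=x_{\hat 0}x_{\hat 1}$.

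\emph{Colon ideal.} First I show
\[
(I:u)=\mathfrak{p}:=(x_a-x_{\hat 1}: a\in L\setminus\{\hat 1\}).
\]
The inclusion $\supseteq$ comes from the cubic elements $x_{\hat 0}x_ax_{\hat 1}-x_{\hat 0}x_{\hat 1}^2$ of the Gr\"obner basis in Lemma \ref{GBpath}. For $\subseteq$, reduce an element of $(I:u)$ modulo $\mathfrak{p}$ to a polynomial $f=\sum_i c_ix_{\hat 1}^i$ in $x_{\hat 1}$ alone. If $f\neq 0$ with leading coefficient $c_d$, then $c_dx_{\hat 0}x_{\hat 1}^{d+1}$ would lie in ${\rm in}_{\preceq}(I)$. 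No element of the Gr\"obner basis of Lemma \ref{GBpath} has an initial term dividing $x_{\hat 0}x_{\hat 1}^{d+1}$, so this is impossible. Hence $\mathfrak{p}\in\Ass(S/I)$, which gives $\depth(S/I)\le\dim S/\mathfrak{p}=1$.

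\emph{The quotient $S/(I,u)$.} Next I prove the analogue of Lemma \ref{GBadd} for the path, by the same argument: the binomials $f_{a,b}$ with $a,b$ incomparable, together with $u$, form a Gr\"obner basis of $(I,u)$. Its initial ideal is $J+(u)$, where $J$ is generated by the monomials $x_ax_b$ with $a,b\in[n]$ incomparable. For a rank $3$ pure lattice, two elements of $[n]$ are incomparable exactly when they are not adjacent in $G_L$. So $J$ is the Stanley--Reisner ideal of $P_n$, viewed as a $1$-dimensional simplicial complex on $[n]$.

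Since $P_n$ is a connected $1$-dimensional complex, it is Cohen--Macaulay, so $\depth K[x_1,\dots,x_n]/J=2$. Since $P_n$ is acyclic, $\reg K[x_1,\dots,x_n]/J=1$, in contrast with the cycle case where the regularity is $2$. Tensoring with $K[x_{\hat 0},x_{\hat 1}]/(u)$, which has depth $1$ and regularity $1$, gives $\depth S/(J+(u))=3$ and $\reg S/(J+(u))=2$. Because the initial ideal is squarefree, \cite[Corollary 2.7]{CV} transfers both values to $S/(I,u)$.

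\emph{Depth and the upper bound on regularity.} Use the exact sequence
\[
0\to S/\mathfrak{p}(-2)\to S/I\to S/(I,u)\to 0.
\]
Since $S/\mathfrak{p}\cong K[x_{\hat 0},x_{\hat 1}]$ has depth $1$, the sequence gives $\depth S/I\ge\min\{1,3\}=1$, which proves (i). For regularity, $\reg(S/\mathfrak{p}(-2))=2$ and $\reg S/(I,u)=2$, so $\reg S/I\le 2$, i.e.\ $\reg I\le 3$.

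\emph{Lower bound on regularity.} I take this from local cohomology. Since $\depth S/(I,u)=3$, both $H^0_\mathfrak{m}$ and $H^1_\mathfrak{m}$ of $S/(I,u)$ vanish. The long exact sequence then makes $H^1_\mathfrak{m}(S/\mathfrak{p}(-2))\to H^1_\mathfrak{m}(S/I)$ injective. The top nonzero degree of $H^1_\mathfrak{m}(S/\mathfrak{p}(-2))$ is $-1+2=1$, so $a_1(S/I)\ge 1$ and $\reg S/I\ge 2$. This gives $\reg I=3$, proving (ii).

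The main obstacle is the Gr\"obner basis computation for $(I,u)$ and the identification of its initial ideal with $J+(u)$. This requires checking that Lemma \ref{GBpath} supplies exactly the right reductions. In particular, the pairs near the two endpoints of the path behave differently from the cycle case, so the S-pair reductions borrowed from Lemma \ref{GBcycle} must be rechecked there.
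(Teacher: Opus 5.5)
Your proposal is correct and follows the paper's route: the paper simply reruns the proof of Lemma~\ref{CCCCC} with Lemma~\ref{GBpath}, notes that the initial ideal of $(I_L,x_{\hat 0}x_{\hat 1})$ is $J+(x_{\hat 0}x_{\hat 1})$ with $\reg(S/J)=1$, and your local-cohomology lower bound is a valid justification of the equality $\reg(S/I)=\max\{\reg S/\mathfrak p+2,\reg S/(I,x_{\hat 0}x_{\hat 1})\}$ that the paper asserts without comment. One small slip: since $x_{\hat 0}-x_{\hat 1}\in\mathfrak p$, one has $S/\mathfrak p\cong K[x_{\hat 1}]$, not $K[x_{\hat 0},x_{\hat 1}]$; your numbers (depth $1$, top degree $-1$ of $H^1_\mathfrak m$ before the shift) are those of $K[x_{\hat 1}]$, so the argument is unaffected.
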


\begin{proof}
 The argument is similar to that of Lemma \ref{CCCCC}. So, we omit the proof. The difference is that one uses Lemma \ref{GBpath} instead of Lemma \ref{GBcycle}. Furthermore, in this case, the ideal $(I_L, x_{\hat 0}x_{\hat 1})$ has an initial ideal of the form $J+(x_{\hat 0}x_{\hat 1})$, where $J$ is the Stanley--Reisner ideal of $P_n$ as a $1$-dimensional simplicial complex. So, $\reg({S/J})=1$. Consequently, one has $\reg({S/J+(x_{\hat 0}x_{\hat 1}}))=2$.
 \, \, \, \, \,  \, \, \, \, \, \, \,
\end{proof}

\begin{Example}
\label{EX2.11}
    {\em
    Let $L$ be a connected pure lattice of rank $3$.
    If $G_L$ is the path $P_{6}$, then $\depth({S/I_{L}})=2$ and  $\reg({I_{L}})=3$.
    If $G_L$ is the path $P_{5}$, then $\depth({S/I_L})=3$ and  $\reg({I_L})=4$ (Example \ref{aaaaa}).
    }
\end{Example}

Recall that the degree of a vertex $x$ of a finite graph $G$ is the number of vertices $y$ for which $\{x,y\}$ is an edge of $G$.

\begin{Lemma}
\label{QQQQQ}
Let $L$ be a finite connected pure lattice of rank $3$ and suppose that the bipartite graph $G_L$ possesses a vertex whose degree is at least $3$.
    Then the join-meet ideal $I_L$ is not linearly related and $\reg(I_L) \geq 4$.
\end{Lemma}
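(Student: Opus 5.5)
The plan is to reduce to a small induced sublattice, exactly as in the proof of Theorem \ref{Lnmsharp}. There, \cite[Proposition 2.1]{DEH} says that linear relatedness and the regularity lower bound pass from an induced sublattice to the whole lattice. More precisely, if $L'$ is an induced sublattice of $L$, then $\beta_{i,j}(I_{L'})\leq\beta_{i,j}(I_L)$. This is because $K[L']/I_{L'}$ is an algebra retract of $K[L]/I_L$. So it suffices to exhibit an induced sublattice $L'$ of $L$ with $\beta_{1,4}(I_{L'})\neq 0$. Such an $L'$ gives a nonlinear first syzygy, so $I_L$ is not linearly related. It also gives $\reg(I_L)\geq \reg(I_{L'})\geq 4$, using \cite[Corollary 2.5]{OHH} as in Lemma \ref{Xpent}.

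\textbf{Step 1: the sublattice.} Let $v$ be a vertex of $G_L$ of degree at least $3$. By duality we may assume $v$ is an atom, since replacing $L$ by its dual lattice changes neither $I_L$ nor its Betti numbers. Pick three coatoms $c_1,c_2,c_3$ covering $v$. Because $L$ is connected with $|L|\geq 7$, I would also pick an atom $w\neq v$ lying below one of the $c_i$, say $c_1$; it exists because $c_1$ has rank $2$ and lies in an interval of length $2$ above $\hat 0$, which contains at least two atoms in a lattice. Set
\[
L'=\{\hat 0,v,w,c_1,c_2,c_3,\hat 1\}.
\]
Then check that $L'$ is closed under the meet and join of $L$, i.e.\ that it is an induced sublattice in the sense of \cite{DEH}. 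Joins of distinct coatoms are $\hat 1$. The meet $c_i\wedge c_j$ contains $v$, and it equals $v$ because $v$ has rank $1$ and the $c_i$ are distinct of rank $2$. One also has $w\vee v=c_1$, $w\wedge v=\hat 0$, $w\wedge c_j=\hat 0$ for $j\neq 1$ when $w\not< c_j$, and so on. If $w$ also lies below $c_2$, then I would instead take the smaller configuration, or pick $w$ below only $c_1$; when no such choice exists, I would handle that case separately.

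\textbf{Step 2: the syzygy.} For the resulting small lattice I would read off the relevant sublattice. In the generic case $L'$ is the $7$-element lattice whose graph is a path $P_5$ (with $w$ adjacent to $c_1$ only), or a star $K_{1,3}$ plus a pendant. The key model is Figure 6, whose graph $G_L$ is exactly the claw-like path of Example \ref{aaaaa}. There $\beta_{1,4}(I_L)=5$ and $\reg(I_L)=4$ are known. So I aim to find an induced sublattice isomorphic to the lattice of Figure 6: $\hat 0$, two atoms, three coatoms, $\hat 1$, where the middle coatom is above both atoms. Concretely, take atoms $v,w$ with a common coatom $c_1$, plus a coatom $c_2$ above $v$ only and a coatom $c_3$ above $w$ only. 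If $v$ has degree $\geq 3$, we can choose $c_2\neq c_1$ above $v$. For the coatom above $w$, connectedness and purity are needed; this is the case analysis I expect to be the main obstacle, together with verifying that the chosen $7$ elements are closed under $\wedge,\vee$ (meets of two coatoms must be the common atom or $\hat 0$, never a different atom outside $L'$). That holds since two rank-$2$ elements share at most one atom in a lattice.

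\textbf{Fallback.} If the configuration of Figure 6 cannot be forced, the fallback is $L'=\{\hat 0,v,c_1,c_2,c_3,\hat 1\}$ together with one more element. In that case I would compute $\beta_{1,4}$ directly, exhibiting an explicit cubic syzygy among the $f_{c_i,c_j}=x_{c_i}x_{c_j}-x_vx_{\hat 1}$ that is not generated by linear ones, via a Gröbner basis in the style of Lemma \ref{GBcycle}.
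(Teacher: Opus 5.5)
\textbf{There is a genuine gap, at the transfer step.} Closure under $\wedge$ and $\vee$ is not the hypothesis of \cite[Proposition 2.1]{DEH}.

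Your retraction sends $x_a\mapsto x_a$ for $a\in L'$ and $x_a\mapsto 0$ otherwise. If $c\notin L'$ or $d\notin L'$, it sends $f_{c,d}$ to $-x_{c\wedge d}x_{c\vee d}$ or to $0$. So it maps $I_L$ into $I_{L'}$ only if $L'$ is \emph{induced}: whenever $c\wedge d$ and $c\vee d$ lie in $L'$, so do $c$ and $d$.

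Every sublattice you propose contains $\hat 0$ and $\hat 1$. Then an atom $a\notin L'$ must lie below every coatom of $L'$. Likewise a coatom $c\notin L'$ must lie above every atom of $L'$, since otherwise $a\wedge c=\hat 0$ and $a\vee c=\hat 1$. In both of your configurations, $v$ lies under two coatoms of $L'$, and $v,w$ already share $c_1$. Two atoms of a lattice share at most one coatom, so this forces $L'=L$. For the Figure~6 configuration that would mean $G_L=P_5$, contradicting the degree hypothesis. So the plan cannot be completed.

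The failure is not a technicality: Betti numbers are not monotone under plain sublattices. Let $G_L$ be the path $a_1,b_1,a_2,b_2,a_3,b_3$, with atoms $a_i$ and coatoms $b_i$. Then $L\setminus\{b_3\}$ is a sublattice with graph $P_5$. By Example~\ref{EX2.11} its ideal has regularity $4$, while $\reg(I_L)=3$. Under your map, $f_{a_1,b_3}\mapsto -x_{\hat 0}x_{\hat 1}$, which is not in $I_{L\setminus\{b_3\}}$.

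Independently, the existence of $w$ in Step~1 is wrong. An interval of length $2$ in a lattice can be a $3$-element chain. If $G_L$ is the star $K_{1,4}$, there is no second atom at all.

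\textbf{The missing idea is to use an interval instead.} Let $v$ be an atom and $c_1,\dots,c_q$, with $q\geq 3$, all the coatoms above it. Then $[v,\hat 1]=\{v,c_1,\dots,c_q,\hat 1\}$ is the diamond $D_{q+2}$, because $c_i\wedge c_j=v$ and $c_i\vee c_j=\hat 1$. An interval $[u,u']$ is automatically induced: if $c\wedge d$ and $c\vee d$ lie in it, then $u\leq c\wedge d\leq c\leq c\vee d\leq u'$. Since $D_{q+2}$ is modular and nondistributive, \cite[Theorem 2.2]{DEH} says its join--meet ideal is not linearly related and has regularity at least $4$. Then \cite[Proposition 2.1]{DEH} transfers both properties to $I_L$.

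This is the paper's proof up to duality; the paper takes a coatom $x$ of degree $q$ and uses $[\hat 0,x]$. Your fallback set $\{v,c_1,c_2,c_3,\hat 1\}$, once you drop $\hat 0$ and the extra element, is exactly this interval, and no Gr\"obner basis computation is needed.
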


\begin{proof}
Let $x$ be a vertex whose degree is $q \geq 3$ and $y_1, \ldots, y_q$ the vertices with each $\{x, y_i\}$ is an edge of $G$.  One can assume that ${\hat 0} < y_i < x$ in $L_G$.  Thus the interval $[{\hat 0}, x]$ is the (generalized) diamond lattice $D_{q+2}$ (\cite[p.~481]{DEH}).  Moreover, it is easy to see that $[{\hat 0},x]$ is an induced sublattice of $L$.  Since the join-meet ideal of $D_{n+2}$ is not linearly related and its regularity is at least $4$ (\cite[Theorem 2.2]{DEH}), it follows from \cite[Proposition 2.1]{DEH} that the join--meet ideal of $L$ is not linearly related with $\reg(I_L)\geq4$.
\, \, \, \, \,
\end{proof}

\begin{Theorem}
    \label{rank3lattice}
Let $L$ be a finite connected pure lattice of rank $3$ with $|L| \geq 7$ and $G_L$ its bipartite graph.

(1)  One has $\reg(I_L) \geq 3$.  In particular, $I_L$ does not have linear resolution.  Furthermore, $\reg(I_L) = 3$ if and only if $G_L$ is (i) the cycle  $C_6$ of length $6$ or (ii) a path $P_n$ with $n \geq 6$.

(2) Suppose that the bipartite $G_L$ is neither a cycle nor a path.
    Then the join-meet ideal of $L$ is not linearly related.
\end{Theorem}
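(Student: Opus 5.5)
The plan is a case analysis on the shape of the connected bipartite graph $G_L$, reducing each case to a result already established above. Two preliminary observations are needed.

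First, $L$ is pure of rank $3$, so every element of $G_L$ is an atom or a coatom. Each atom lies below at least one coatom, so every vertex of $G_L$ has degree at least $1$.

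Second, $L$ is a lattice, so two distinct atoms cannot lie below two common coatoms. Hence $G_L$ has no $4$-cycle.

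Since $G_L$ is connected, exactly one of the following holds:
\begin{itemize}
\item[(a)] $G_L$ has a vertex of degree at least $3$;
\item[(b)] $G_L$ is a path $P_n$;
\item[(c)] $G_L$ is an even cycle $C_{2n}$ with $2n\geq 6$.
\end{itemize}
The hypothesis $|L|\geq 7$ means $G_L$ has at least $5$ vertices.

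Part (2) is immediate. A connected graph that is neither a path nor a cycle must have a vertex of degree at least $3$. Lemma \ref{QQQQQ} then says that $I_L$ is not linearly related.

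For part (1), I would determine $\reg(I_L)$ in each case.
\begin{itemize}
\item In case (a), Lemma \ref{QQQQQ} gives $\reg(I_L)\geq 4$.
\item In case (c) with $2n\geq 8$, Lemma \ref{CCCCC} gives $\reg(I_L)=4$. Its proof actually computes $\reg(S/I_L)=3$, which is the same statement.
\item In case (c) with $2n=6$, $L$ has exactly $8$ elements and is forced to be the second lattice of Figure $5$. Example \ref{EX2.2} then gives $\reg(I_L)=3$.
\item In case (b) with $n\geq 7$, Lemma \ref{TTTTT} gives $\reg(I_L)=3$.
\item In case (b) with $n=6$, Example \ref{EX2.11} gives $\reg(I_L)=3$.
\item In case (b) with $n=5$, $L$ is the lattice of Figure $6$, and Examples \ref{aaaaa} and \ref{EX2.11} give $\reg(I_L)=4$.
\end{itemize}
Paths with $n\leq 4$ are excluded by $|L|\geq 7$.

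Collecting these values shows $\reg(I_L)\geq 3$ in every case. In particular $I_L$ does not have linear resolution, since its generators are quadrics and a linear resolution would force regularity $2$. The same list shows that equality $\reg(I_L)=3$ occurs exactly for $C_6$ and for $P_n$ with $n\geq 6$.

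The main obstacle is making sure the small cases are really rigid. I must check that a graph $G_L$ isomorphic to $C_6$, $P_5$ or $P_6$ determines $L$ up to isomorphism, possibly after reversing the order. The order reversal is harmless, because $f_{a,b}$ is symmetric under exchanging $\wedge$ and $\vee$, so $I_L$ and $I_{L^{\rm op}}$ coincide up to renaming variables. Once $L$ is identified, the cited examples apply verbatim. I would carry out this identification by noting that the order of $L$ is recovered from $G_L$ together with the bipartition into atoms and coatoms. For a path or an even cycle, the two possible choices of bipartition are exchanged by an automorphism or by order reversal.

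A further check is needed for $P_n$ with $n\geq 7$. Lemmas \ref{GBpath} and \ref{TTTTT} must apply for both parities of $n$, and the computation there should confirm $\reg(I_L)=3$ via the initial ideal $J+(x_{\hat 0}x_{\hat 1})$ together with $\reg(S/J)=1$.
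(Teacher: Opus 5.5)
Your proposal is correct and is essentially the paper's proof. A vertex of degree at least $3$ is handled by Lemma \ref{QQQQQ}, which gives part (2) and $\reg(I_L)\geq 4$; paths and cycles are settled case by case via Examples \ref{EX2.2}, \ref{EX2.11} and Lemmata \ref{CCCCC}, \ref{TTTTT}, with $G_L=C_6$ forcing $L=B_3$ (the second lattice of Figure $5$), and your extra checks (no $4$-cycles, rigidity of the small cases up to order reversal, and reading $\reg(S/I_L)=3$ from the proof of Lemma \ref{CCCCC}) only make explicit what the paper leaves implicit.
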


\begin{proof}
If $G_L$ is neither a cycle nor a path, then the degree of one of the vertices of $G_L$ must be at least $3$.  It then follows from Lemma \ref{QQQQQ} that $I_L$ is not linearly related and $\reg(I_L) \geq 4$.  When $G_L$ is either a cycle or a path, the desired result follows from Examples \ref{EX2.2}, \ref{EX2.11}
and Lemmata \ref{CCCCC}, \ref{TTTTT}. Note that when $G$ is the cycle $C_6$, then $L$ is the boolean lattice $B_3$, and in this case $\reg(I_L)=3$.
\hspace{2.3cm}
\end{proof}

\begin{Question}
    Can one classify finite nonmodular lattices $L$ with $\reg(I_L) = 3$?
\end{Question}

\section*{Acknowledgments}
The second author is supported by a FAPA grant from Universidad de los Andes.

\section*{Statements and Declarations}
The authors have no conflict of interest to declare that are relevant to the content of this article.

\section*{Data availability}
Data sharing does not apply to this article as no new data were created or analyzed in this study.


\begin{thebibliography}{99}
\bibitem{CV}
A.~Conca and M.~Varbaro, Square-free Gröbner degenerations, {\em Inventiones Mathematicae} {\bf 221} (2020), 713--730.
\bibitem{DEH}
R.~Dinu, V.~Ene and T.~Hibi, On the regularity of join-meet ideals of modular lattices, {\em J. Commutative Alg.} {\bf 13} (2021), 479--488.
\bibitem{EHH}
V.~Ene, J.~Herzog and T.~Hibi, Linearly related polyominoes, {\it J. Algebraic Combin.} {\bf 41} (2015), 949--968.
\bibitem{EH}
V.~Ene and T.~Hibi, The join-meet ideal of a finite lattice, {\em J. Commutative Algebra} {\bf 5} (2013), 209--230.
\bibitem{HNTT}
T.~H.~H\`a, H.~D.~Nguyen, N.~V.~Trung, and T.~N.~Trung, Symbolic powers of sums
of ideals, {\em Math. Z.} {\bf 282} (2016), 819--838.
\bibitem{H}
J.~Herzog, A generalization of the Taylor complex construction, {\em Comm. Algebra}  {\bf 35} (2007), 1747--1756.
\bibitem{HHgtm260}
J.~Herzog and T.~Hibi, ``Monomial Ideals'', GTM 260, Springer, 2011.
\bibitem{HH}
J.~Herzog, T.~Hibi, Ideals generated by adjacent $2$-minors, {\em J. Commutative Algebra} {\bf 4} (2012), 525--549.
\bibitem{HHHKR}
J.~Herzog, T.~Hibi, F.~Hreinsd{\'o}ttir, T.~Kahle and J.~Rauh, Binomial edge ideals and conditional independence statements, {\em Adv. Appl. Math.} {\bf 45} (2010), 317--333.
\bibitem{HHO}
J.~Herzog, T.~Hibi, H.~Ohsugi, ``Binomial Ideals,'' GTM 279, Springer, 2017.
\bibitem{Hibi}
T.~Hibi, Distributive lattices, affine semigroup rings and algebras with
straightening laws, {\it in} ``Commutative Algebra and Combinatorics''
(M. Nagata and H. Matsumura, Eds.), Advanced Studies in Pure Math.,
Volume 11, North--Holland, Amsterdam, 1987, pp. 93--109.
\bibitem{Hibired}
T.~Hibi, ``Algebraic Combinatorics on Convex Polytopes,'' Carslaw, Glebe, NSW, Australia, 1992.
\bibitem{HNQS}
T.~Hibi, F.~Navarra, A.~A.~Qureshi and S.~Saeedi Madani, Minimal primes and radicality of ideals generated by adjacent 2-minors, arXiv: 2512.21449.
\bibitem{OHH}
H.~Ohsugi, J.~Herzog and T.~Hibi, Combinatorial pure subrings, {\em Osaka J. Math.} {\bf 37} (2000), 745--757.
\bibitem{Q}
A.~A.~Qureshi, Ideals generated by 2-minors, collections of cells and stack polyominoes, {\em J. Algebra} {\bf 357} (2012), 279--303.
\bibitem{ECI}
R.~Stanley, ``Enumerative Combinatorics, Volume $1$'' (Second Edition), Cambridge University Press, 2011.
\end{thebibliography}
\end{document}